\definecolor{purple}{rgb}{0.9,0,0.8}
\definecolor{gray}{rgb}{0.5,0.5,0.5}
\newtheorem{prop}{Proposition}[section]
\theoremstyle{plain}
\theoremstyle{plain}
\newtheorem{theorem}{Theorem}[section]
\theoremstyle{plain}
\newtheorem{lemma}{Lemma}[section]
\theoremstyle{plain}
\newtheorem{cor}{Corollary}[section]
\theoremstyle{plain}
\newtheorem{exm}{Example}[section]
\theoremstyle{remark}
\newtheorem{remark}{Remark}[section]
\theoremstyle{remark}
\newcommand{\E}{\mathbb{E}}
\newcommand{\R}{\mathbb{R}}
\newcommand{\N}{\mathbb{N}}
\newcommand{\Z}{\mathbb{Z}}
\newcommand{\I}{\mathbb{I}}
\newcommand{\tr}{\text{tr}}
\numberwithin{equation}{section}
\title[LSD of a class of Hankel type random matrices]{Limiting Spectral Distribution of a class of Hankel type random matrices}
\author[A. Basak]{Anirban Basak}
\address{Department of Statistics, Stanford University, Stanford, California 94305}
\email{anirbanb@stanford.edu}
\author[A. Bose]{Arup Bose}
\address{Statistics and Mathematics Unit, Indian Statistical Institute, Kolkata 700108}
\email{bosearu@gmail.com}
\author[S. S. Mukherjee]{Soumendu Sundar Mukherjee}
\address{Indian Statistical Institute, Kolkata 700108}
\email{soumendu041@gmail.com}
\keywords{Hankel matrix, Reverse Circulant matrix, symmetrized Rayleigh distribution, method of moments}
\subjclass[2010]{Primary 15B52, 60B20; secondary 60B10, 60F99, 60B99.}
\date{July 30, 2014}
\begin{document}

\begin{abstract}  We consider an indexed class of real symmetric random matrices which generalize the  symmetric Hankel and Reverse Circulant matrices. We show that the limiting spectral distributions of these matrices exist almost surely and the limit is continuous in the index. We also study other properties of the limit.
\end{abstract}
\maketitle

\section{Introduction}
For an $n\times n$ real symmetric random
matrix $A_n$, let $\lambda_1, \ldots , \lambda_n \in \R $ 
 be its eigenvalues.
The empirical spectral measure $\mu_n$ of $A_n$ is the
random
measure 
\begin{equation}
\mu_n:= \frac{1}{n} \sum_{i=1}^n \delta_{ \lambda_i},
\end{equation}
where $\delta_{x}$ is the Dirac delta measure at $x$. The
corresponding random
probability distribution function  is known as the \emph{Empirical
Spectral Distribution} (ESD) and  is denoted by
$F^{A_n}$. The sequence $\{F^{A_n}\}$ is said to converge (weakly) almost surely to a non-random 
distribution function $F$, if	 outside a null set, as $n \to \infty$, $F^{A_n}(x) \rightarrow F(x)$ for all
continuity points of $F$, and then $F$  is known as the limiting spectral distribution (LSD).

There has been a lot of recent work on obtaining the LSD  of
large dimensional patterned random matrices, including but not limited to  \cite{bose2008another, bryc2006spectral, basu2012joint, bose2011convergence, bose2014bulk, jackson2012distribution, kargin2009spectrum, massey2007distribution, meckes2009some}. These matrices may be defined as follows.
Let $\{L_n\}$ be a sequence of \textit{link} functions such that
\begin{equation}\label{eq:link}
L_n: \{1, 2, \ldots n\}^2 \to \mathbb Z, \  n \geq 1, 
\end{equation}
where $\Z$ denotes the set of all integers. For notational simplicity we write $L$ for $L_n$ and write $\Z_+^2:=\Z_+ \times \Z_+$,
 where $\Z_+$ denotes the set of all non-negative integers, 
 as the common domain of $\{L_n\}$. Furthermore, let  $\{ x_i; i \geq 0\}$ be an  \textit{input sequence}  of random variables.
 Then matrices  of the form
\begin{equation}\label{eq:patternmatrix}
A_n=n^{-1/2}((x_{L(i,j)}))
\end{equation}
are called \textit{patterned matrices}. If
$L(i,j)=L(j,i)$ for all $i,j$, then the matrix is symmetric. 

Two symmetric patterned matrices that have received particular attention in recent times are the Hankel matrix and the Reverse Circulant matrix. For example, see \cite{basak2010balanced, basak2011limiting, bryc2006spectral, bose2009limiting, bose2002limiting, bose2012limiting, li2011limit, liu2011limit}, and the references in \cite{bose2010patterned}. The Hankel and the Reverse Circulant matrices are given 
by 
\[
H_{n}  = \frac{1}{\sqrt{n}}\left[ \begin{array} {cccccc}
            x_{2} & x_{3} & x_{4} & \ldots & x_{n} & x_{n+1} \\
            x_{3} & x_{4} & x_{5} & \ldots & x_{n+1} & x_{n+2} \\
            x_{4} & x_{5} & x_{6} & \ldots & x_{n+2} & x_{n+3} \\
                  &       &       & \vdots &       &   \\
            x_{n+1} & x_{n+2} & x_{n+3} & \ldots & x_{2n-1} & x_{2n}
            \end{array} \right]
\]
and
\[
  RC_{n}  =\frac{1}{\sqrt{n}}\left[ \begin{array} {cccccc}
            x_{2} & x_{3} & x_{4} & \ldots & x_{0} & x_{1} \\
            x_{3} & x_{4} & x_{5} & \ldots & x_{1} & x_{2} \\
            x_{4} & x_{5} & x_{6} & \ldots & x_{2} & x_{3} \\
                  &       &       & \vdots &       &   \\
            x_{1} & x_{2} & x_{3} & \ldots & x_{n-1} & x_{0}
            \end{array} \right]
\]
respectively obtained with the link functions  $L_H(i,j) =i+j$  and 
$L_{RC}(i,j) =(i+j )\mod n$. These matrices are similar but have different LSD.
While the LSD of the Reverse Circulant is known explicitly (it is the symmetrized Rayleigh distribution with density $f(x)=|x| \exp{(-x^2)}, -\infty < x < \infty$), very little is known about the LSD of the Hankel. It is known that it  is not unimodal \cite{bryc2006spectral}, and simulations show that it is actually bimodal.  However nothing else is known about the limit.

Due to the similarity between the matrices, it is natural to ask whether one can bring them under a common class of matrices and obtain a relation between the two LSDs  or, if one can move seamlessly from one to the other. To this end, consider the class of link functions 
\begin{equation}\label{link}
L_{\theta}(i,j)=i+j \, (\text{mod}\lfloor n/\theta \rfloor),
\end{equation}
where $\theta \in (0,\infty)$ and $\lfloor a \rfloor$ denotes the
greatest integer less than or equal to $a$. Let $A_n^\theta$ be the matrix  corresponding to the link function $L_\theta$. 
The Hankel and Reverse Circulant matrices  are obtained respectively when $\theta \leqslant 1/2$  and $\theta=1$. 
Our aim is to explore the existence of the LSD for patterned matrices with this class of link functions for all $\theta$ and investigate the properties of these LSDs, specially in comparison to the LSDs of Hankel and Reverse Circulant. 

We show that for all $\theta\in(0,\infty)$ the LSD exists, is symmetric about zero, and is universal when the input sequence is i.i.d.~with mean zero and variance one (see Theorem \ref{thm:lsd}(i)). Moreover, the moments of the LSD are dominated by some Gaussian moments and are always at least as large as the moments of the corresponding Hankel LSD. Additionally, we show that if $\theta \leqslant 1$, the $2k$-th moment is dominated by $k!$, which is the $2k$-th moment of the Reverse Circulant. 
 
 One expects that the discontinuity of $\lfloor \cdot \rfloor$ would be washed away in the limit, and the LSDs must be continuous, when viewed as a function of $\theta$. We confirm this intuition in Theorem \ref{thm:lsd}(ii). Further, when $\theta$ is an integer, we explicitly identify the  LSD in Theorem \ref{thm:theta_integer}. 
 
For general $\theta \in (0,\infty)$, we can also derive some information about the so called word limits $p_\theta(w)$. For the patterned matrices,  Hankel, Reverse Circulant, Toeplitz, Wigner etc.,
 it is known that for every Catalan word $w$ (see Section \ref{sec:wordlimits} for definition), $p(w)=1$. We prove in this article that for $\theta\leq 1$ one still has $p_\theta(w)=1$ for each Catalan word $w$. However, for $\theta>1$ they are not even equal and each $p_\theta(w) > 1$. Further, as $\theta \rightarrow \infty$, for every Catalan word $w$ of length $2k$,  $p_{\theta}(w)\sim \theta^{k-1}$. We also provide a recursive relation for computing $p_\theta(w)$, when $w$ is a Catalan word (see Section \ref{sec:wordlimits} for more details). {Extending the ideas in \cite{bryc2006spectral} one can prove that the limits are not unimodal. However we do not pursue that direction in this paper. Simulations also show that the LSD, when the mass at zero is removed, is bimodal for all $\theta$. 

Here is the outline of the paper:  in Section \ref{sec:prelim_main_results} we introduce the necessary terminology, and state the main results. Section \ref{sec:thelsd} is devoted mainly to the proof of Theorems \ref{thm:repr} and \ref{thm:lsd}. In Section \ref{sec:wordlimits} we prove Theorem \ref{thm:gen_hankel_cat}, which provides a recursive relation for computing $p_\theta(w)$, when $w$ is a Catalan word, followed by the statements and proofs of several corollaries which identify some behavior of the Catalan word limits in different regimes of $\theta$.  This section also includes a proof of Theorem \ref{thm:theta_integer}. 

\section{Preliminaries and Main Results}\label{sec:prelim_main_results}
We shall use the method of moments to establish the existence of the LSD. For any matrix $A$, let $\beta_h(A)$ denote the $h$-th moment of the ESD of $A$. 
We quote the following lemma which is easy to prove \cite[see for example,][]{bose2008another}. 
\begin{lemma}\label{mainlemma}
Fix $\theta \in (0,\infty)$. Let $\{A_n^\theta\}$ be the sequence of real symmetric random matrices formed from an input sequence $\{x_i: i \ge 0\}$, via the link function $L_\theta$. Suppose there exists a sequence $\{\beta_h^{(\theta)}\}$ such that 

\noindent 
(i) for every $h\geq 1$, $\E(\beta_h(A_n^\theta))\rightarrow \beta_h^{(\theta)}$, 

\noindent (ii) for every $h\geq 1$  $\sum_{n=1}^{\infty}\E[\beta_h(A_n^\theta)-\E(\beta_h(A_n^\theta))]^4<\infty$ and 

\noindent 
(iii) the sequence $\{\beta_h^\theta\}$ satisfies Carleman's condition,  i.e. $\sum_{h=1}^\infty(\beta_{2h}^{(\theta)})^{-1/2h}=\infty$.

\noindent
Then  the LSD of $F^{A_n^\theta}$ exists and equals $F^\theta$ with moments $\{\beta_h^{(\theta)}\}$.
\end{lemma}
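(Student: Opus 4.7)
The plan is to use the method of moments in the standard way: conditions (i) and (ii) combined give almost sure convergence of the $h$-th moments of the ESD to $\beta_h^{(\theta)}$, while condition (iii) ensures that these moments determine a unique probability distribution $F^\theta$, so by the moment convergence theorem the ESDs converge weakly to $F^\theta$ almost surely.

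More concretely, I would first address the almost sure convergence of $\beta_h(A_n^\theta)$ to $\beta_h^{(\theta)}$ for each fixed $h$. Writing
\begin{equation*}
\beta_h(A_n^\theta) - \beta_h^{(\theta)} = \bigl(\beta_h(A_n^\theta) - \E \beta_h(A_n^\theta)\bigr) + \bigl(\E \beta_h(A_n^\theta) - \beta_h^{(\theta)}\bigr),
\end{equation*}
the second term vanishes as $n \to \infty$ by (i). For the first term, Markov's inequality yields, for every $\varepsilon > 0$,
\begin{equation*}
\sum_{n=1}^\infty \mathbb{P}\bigl(|\beta_h(A_n^\theta) - \E \beta_h(A_n^\theta)| > \varepsilon\bigr) \le \varepsilon^{-4} \sum_{n=1}^\infty \E\bigl[\beta_h(A_n^\theta) - \E \beta_h(A_n^\theta)\bigr]^4 < \infty
\end{equation*}
by (ii), so the Borel--Cantelli lemma gives $\beta_h(A_n^\theta) - \E \beta_h(A_n^\theta) \to 0$ almost surely. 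Combining, $\beta_h(A_n^\theta) \to \beta_h^{(\theta)}$ almost surely for each fixed $h$. Taking a countable intersection over $h \in \mathbb{N}$ shows that there is a single null set outside of which $\beta_h(A_n^\theta) \to \beta_h^{(\theta)}$ for every $h \ge 1$ simultaneously.

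Next, condition (iii) (Carleman's condition) ensures that the moment sequence $\{\beta_h^{(\theta)}\}$ uniquely determines a probability distribution $F^\theta$ on $\R$. Indeed, the Carleman condition guarantees that any distribution with these moments is unique; existence of such a distribution follows from the fact that $\beta_h^{(\theta)}$ is the limit of the moments of probability measures (recall that $\beta_h(A_n^\theta)$ is the $h$-th moment of $\mu_n$ which is a random probability measure). Outside the null set described above, the ESDs $F^{A_n^\theta}$ form a sequence of distribution functions whose moments converge to those of $F^\theta$; a standard application of the method of moments (i.e., tightness follows from boundedness of even moments, and any weak subsequential limit must have moments $\{\beta_h^{(\theta)}\}$ and hence equal $F^\theta$) then yields $F^{A_n^\theta} \Rightarrow F^\theta$ almost surely.

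There is no real obstacle here; the argument is routine given the three assumptions. The only mild subtlety is making sure that the null sets obtained for each $h$ can be merged into a single null set so that the moment convergence theorem applies pathwise, which is handled by the countable intersection just mentioned.
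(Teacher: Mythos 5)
Your proposal is correct and is essentially the standard argument that the paper has in mind when it quotes this lemma as ``easy to prove'' from \cite{bose2008another}: Markov's inequality with the fourth moment plus Borel--Cantelli for almost sure convergence of each moment, a countable intersection over $h$, and Carleman's condition to identify the unique limit via the method of moments. No gaps; this matches the intended proof.
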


\noindent
Our goal will be to establish all the hypotheses of Lemma \ref{mainlemma}, and then the existence of the LSD will follow. To claim the existence of the LSD for all $\theta$, we shall make use of the general notation and theory developed in \cite{bose2008another} for patterned matrices.  First observe that $L_\theta$  satisfies the so called {\bf Property B} -- the total number of times any particular variable appears in any row is uniformly bounded. Moreover, the total number of different variables in the matrix and the total number of times any variable appears in the matrix are both of the order $n$. This implies that the general theory developed in \cite{bose2008another} applies to this class of link functions.\\\\
\noindent
The LSD of patterned matrices is usually studied under one of the following assumptions:

\vskip10pt

\noindent 
{\bf Assumption 1.}  $\{x_i\}$  are independent and uniformly bounded with mean $0$, and variance $1$.\vskip10pt

\noindent 
{\bf Assumption 2.}   $\{x_i\}$  are i.i.d.~with mean $0$ and variance $1$.\vskip10pt

\noindent 
{\bf Assumption 3.}   $\{x_i\}$  are independent with mean $0$ and variance $1$, and with uniformly bounded moments of all orders.

\vskip10pt

\noindent
If the LSD exists under Assumption 1, then using a truncation argument it can be shown that the same LSD continues to hold under Assumptions 2 or 3. See for instance Theorems 1 and 2 of \cite{bose2008another}. Thus all our  proofs will be presented only under Assumption 1.  
Traditionally LSD results are stated under Assumption 1, and Assumption 3 is appropriate while studying the joint convergence of more than one sequence of matrices. 
\vskip10pt

\noindent
To use Lemma \ref{mainlemma}, one needs to guarantee the limits of $\{\E[\beta_h(A_n^\theta)]\}_{n=1}^\infty$ for all $h \ge 1$. 
We first provide a better  representation of $\E[\beta_h(A_n^\theta)]$  via the \emph{Moment-Trace Formula} using 
the terminology introduced in \cite[Section 3]{bose2008another}. 
We call a function $$\pi : \{0, 1, \ldots, h\} \rightarrow \{1,2,\ldots,n\}$$ with $\pi(0) = \pi(h)$ a \emph{circuit} 
of
length $h$. For notational simplicity suppressing the dependence of a circuit on $h$ and $n$ we note that
\begin{equation}\label{eq:momtr}
\E[\beta_h(A_n^\theta)]= \frac{1}{n}\E\tr[(A_n^\theta)^h]=\frac{1}{n}\sum_{\pi \text{ circuit of length $h$}}\E[a_\pi^\theta],
\end{equation}
where 
\[A_n^\theta=((a_{L_\theta(i,j)})), \ 
a_{\pi}^\theta := a_{L_\theta(\pi(0),\pi(1))}a_{L_\theta(\pi(1),\pi(2))}\ldots a_{L_\theta(\pi(h-1),\pi(h))}.
\]

To simplify the expression further we identify the pair $(i,j)$ for which their $L_\theta$-values match, i.e. $L_\theta(\pi(i-1),\pi(i))=L_\theta(\pi(j-1),\pi(j))$, with $i<j$.
From \cite[Lemma 1]{bose2008another} and the fact that all the entries in the random matrix $A_n^\theta$ have zero mean it follows that circuits where there are only $\theta$-pair-matches\footnote{A circuit is  $\theta$-pair-matched if all the $L_\theta$-values are repeated exactly twice.} are relevant when computing limits of moments. So it is enough to consider the summation over all $\theta$-pair-matched circuits of length $h$ in (\ref{eq:momtr}).

Now for every $\theta \in (0,\infty)$, we define a $\theta$-equivalence relation on the set of all circuits of length $h$, such that 
 $\E a_\pi^\theta$ are same on each equivalence class: two circuits $\pi_1$ and $\pi_2$ are $\theta$-equivalent if and only if their $L_\theta$-values respectively match at the
same locations, i.e. if for all $i, j$,
\[
L_\theta(\pi_1(i-1),\pi_1(i))=L_\theta(\pi_1(j-1),\pi_1(j)) \Leftrightarrow L_\theta(\pi_2(i-1),\pi_2(i))=L_\theta(\pi_2(j-1),\pi_2(j)).
\]

Thus any $\theta$-equivalence class can be indexed by a partition of $\{1,2,\ldots, h\}$. 
We identify these partitions with \emph{words} of length $l(w) := h$ of letters, where every member of a partition block is denoted by a single letter, and the first occurrence
of each letter is in alphabetical order. For example if $h = 4$ then the partition
$\{\{1,3\},\{2,4\}\}$ is represented by the word $abab$. For a $\theta \in (0,\infty)$, this identifies all circuits $\pi$ for
which $L_\theta(\pi(0),\pi(1)) = L_\theta(\pi(2), \pi(3))$ and $L_\theta(\pi(1),\pi(2)) = L_\theta(\pi(3),\pi(4))$.
Therefore, denoting $w[i]$ to be the $i^{th}$ entry of $w$, the $\theta$-equivalence class corresponding to $w$ can be written as 
\[
\Pi_\theta(w) := \{\pi \mid w[i]=w[j]\Leftrightarrow L_\theta(\pi(i-1),\pi(i))=L_\theta(\pi(j-1),\pi(j))\}.
\]
The number of partition blocks corresponding to $w$ equals the
number of distinct letters in $w$, say $|w|$ and for any $\pi\in \Pi_\theta(w)$,
\[
|w| = \#\{L_\theta(\pi(i-1),\pi(i))\mid 1\leqslant i \leqslant h \},
\]
where $\#\mathcal{A}$ denotes the cardinality of the set $\mathcal{A}$. 
Note that for any
fixed $h$ even as $n\rightarrow \infty$, the number of words (equivalence classes) remains finite but
the number of circuits in any given $\Pi_\theta(w)$ may grow indefinitely. Henceforth we shall denote the set of all words of length $h$ by $\mathcal{W}_{h}$. 
Notions of matches carry over to words. 
A word is \emph{pair-matched} if every letter 
appears exactly twice in that word. The set of all pair-matched words of length $2k$
is  denoted by $\mathcal{W}_{2k}^{(p)}$.
For technical reasons it is often easier to deal with a class larger than $\Pi_\theta(w)$:
\[
\Pi^*_\theta(w) = \{\pi \mid w[i]=w[j]\Rightarrow L_\theta(\pi(i-1),\pi(i))=L_\theta(\pi(j-1),\pi(j))\}.
\]
From \cite[Lemma 1(b)(ii)]{bose2008another} it follows that, for any $\theta \in (0,\infty)$, every $k \ge 1$, and every $w\in \mathcal{W}_{2k}^{(p)}$, $\lim_{n \rightarrow \infty}n^{-(k+1)} |\#(\Pi_\theta(w) \setminus\Pi^*_\theta(w))|=0$. Furthermore noting that by varying $w$, we obtain all the equivalence classes, and $\E(a_\pi^\theta
)=n^{-k}$ for any $\theta$-pair matched circuit of length $2k$, we deduce that
\begin{equation}
\lim_{n \rightarrow \infty} \E[\beta_{2k}(A_n^\theta)] = \sum_{w \in \mathcal{W}_{2k}^{(p)}} \lim_{n \rightarrow \infty} n^{-(k+1)}\# \Pi_\theta^*(w),
\end{equation}
provided the limits in the right  side exist.  That they  exist  is the contention of Theorem \ref{thm:repr}. 

To state this theorem, we need the following
 notions: any $i$ (or $\pi(i)$ by abuse of notation) is  a \emph{vertex}. It is \emph{generating} if either $i = 0$
or $w[i]$ is the first occurrence of a letter. Otherwise, it is non-generating. For
example, if $w = abbcab$ then $\pi(0), \pi(1), \pi(2), \pi(4)$ are generating and $\pi(3), \pi(5), \pi(6)$
are non-generating. The set of generating vertices (indices) is denoted by $S(w)$.
\begin{theorem}\label{thm:repr} Fix $\theta \in (0,\infty)$ and $k \ge 1$. Then
 for each $w \in \mathcal{W}_{2k}^{(p)}$, we have 
\begin{align}\label{gen_hankel_word_limit}
\nonumber p_\theta(w)&=\lim \frac{1}{n^{1+k}}\Pi^*_\theta(w)\\
\nonumber &= \sum_{\gamma\in \mathcal{A}_{\theta}^k}\theta^{-k-1} \underbrace{\int_{0}^{\theta}\cdots \int_{0}^{\theta}}_{|w|+1 \text{ fold}} \I\, (L_i^H(\nu_{S(w)})+a_i^{(\gamma)} \in (0,\theta), \text{ for $i \notin S(w) \cup \{2k\}$})\\
& \hskip180pt \times \I(\nu_0=L_{2k}^H(\nu_{S(w)})+a_{2k}^{(\gamma)})\, \text{d}\nu_{S(w)},
\end{align}
where $\mathcal{A}_{\theta}=\{0,\pm 1,\ldots \pm \lfloor 2\theta \rfloor \}$, $\{a_i^{(\gamma)}\}_{i \notin S(w)}$ are some constants depending on $\gamma$ (and possibly also on the word $w$), and $\{L_i^H(\cdot)\}_{i \notin S(w)}$ are some linear functions of the variables $\nu_{S(w)}=\{\nu_i: i\in S(w)\}$.
\end{theorem}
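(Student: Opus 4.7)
The plan is to enumerate $\Pi^*_\theta(w)$ by expressing each non-generating vertex as a linear function of the generating ones modulo $m:=\lfloor n/\theta\rfloor$, bounding the integer lifts to a finite set, and recognizing the resulting count as a Riemann sum.

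The match condition $L_\theta(\pi(i-1),\pi(i))=L_\theta(\pi(j-1),\pi(j))$ for $w[i]=w[j]$ with $i<j$ translates to the congruence $\pi(i-1)+\pi(i)\equiv\pi(j-1)+\pi(j)\pmod m$. Reading $w$ from left to right, at each non-generating index $j\notin S(w)$ this congruence introduces exactly one new variable, namely $\pi(j)$; substituting recursively the expressions already obtained for earlier non-generating vertices, one solves $\pi(j)=L_j(\pi_{S(w)})+c_j\,m$ with an integer-linear function $L_j$ of $\pi_{S(w)}$ (with coefficients bounded in absolute value by a constant depending only on $w$) and an integer $c_j$ chosen uniquely so that $\pi(j)\in\{1,\ldots,n\}$. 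Since $|L_j(\pi_S)|=O(n)$ while $m\asymp n/\theta$, a direct estimate confines $c_j$ to the finite set $\mathcal{A}_\theta=\{0,\pm 1,\ldots,\pm\lfloor 2\theta\rfloor\}$. Collect the lifts into $\gamma=(c_j)_{j\notin S(w)}\in\mathcal{A}_\theta^k$.

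Now rescale by $\nu_i:=\theta\pi(i)/n\in(0,\theta]$, letting $L_j^H:=(\theta/n)L_j$ (a $\R$-linear function of $\nu_{S(w)}$) and $a_j^{(\gamma)}:=c_j\cdot\theta m/n=c_j+O(n^{-1})$. The range condition $\pi(j)\in\{1,\ldots,n\}$ for non-generating $j\ne 2k$ becomes $L_j^H(\nu_S)+a_j^{(\gamma)}\in(0,\theta]$, and the circuit-closing relation $\pi(0)=\pi(2k)$ becomes $\nu_0=L_{2k}^H(\nu_S)+a_{2k}^{(\gamma)}$, both up to $O(n^{-1})$ boundary slack. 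Hence
\[
\frac{|\Pi^*_\theta(w)|}{n^{k+1}}=\sum_{\gamma\in\mathcal{A}_\theta^k}\frac{1}{n^{k+1}}\sum_{\pi_{S(w)}\in\{1,\ldots,n\}^{k+1}}\prod_{j\notin S(w)\cup\{2k\}}\I(\pi(j)\in\{1,\ldots,n\})\cdot\I(\pi(0)=\pi(2k)),
\]
and the inner discrete sum is a Riemann approximation of spacing $\theta/n$ to the corresponding $(k+1)$-fold integral over $(0,\theta]^{k+1}$; dividing by $n^{k+1}$ produces the factor $\theta^{-(k+1)}$ of the statement, and summing over the finite set $\mathcal{A}_\theta^k$ delivers the claimed identity.

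The main technical point is giving a consistent meaning to the closing indicator $\I(\nu_0=L_{2k}^H(\nu_S)+a_{2k}^{(\gamma)})$ on the two sides. When, as a $\Z$-linear function of $\nu_S$, the right-hand side reduces to $\nu_0$ itself, the condition collapses to the scalar constraint $a_{2k}^{(\gamma)}=0$, and such lifts $\gamma$ contribute a genuine $(k+1)$-dimensional integral matching a $\Theta(n^{k+1})$ combinatorial count; otherwise the locus is a proper hyperplane of zero Lebesgue measure, and on the discrete side the corresponding circuits number only $O(n^k)=o(n^{k+1})$. Both contributions then vanish consistently. The remaining work is routine: a uniform Riemann-sum convergence estimate over the finite index set $\gamma\in\mathcal{A}_\theta^k$, together with absorbing the $O(n^{-1})$ slack in the range indicators into a negligible boundary error.
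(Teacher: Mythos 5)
Your skeleton --- solve the match congruences for the non-generating vertices, stratify the circuits by the integer lifts of those congruences, and pass to the limit by a Riemann-sum argument, with the closing indicator handled via the dichotomy ``degenerate constraint versus measure-zero hyperplane'' --- is the same route the paper follows. The genuine gap is in the step that is supposed to produce the index set $\mathcal{A}_\theta^k$ of the statement. You attach the lift $c_j$ to the \emph{fully substituted} relation $\pi(j)=L_j(\pi_{S(w)})+c_j m$, $m=\lfloor n/\theta\rfloor$, and assert that since $|L_j(\pi_{S(w)})|=O(n)$ and $m\asymp n/\theta$, ``a direct estimate confines $c_j$'' to $\mathcal{A}_\theta=\{0,\pm1,\dots,\pm\lfloor 2\theta\rfloor\}$. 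That estimate does not give this bound: after recursive substitution the coefficients of $L_j$ are word dependent. For example, for $w=abacbcdd$ one gets $\pi(6)=\pi(0)-2\pi(2)+2\pi(4)+(\epsilon_1-\epsilon_2+\epsilon_3)m$, where the $\epsilon_s$ are the per-match lifts; here $L_6(\pi_{S(w)})$ ranges over an interval of length about $5n$, so the direct estimate only yields $|c_6|\lesssim 3\theta$, and in general it yields $|c_j|\le C_w\,\theta$ with $C_w$ growing with the word. Indeed the substituted constant is an integer combination of per-match lifts and is realized outside $\mathcal{A}_\theta$ by genuine circuits, so the sum you write down over $\gamma\in\mathcal{A}_\theta^k$ with $a_j^{(\gamma)}=c_j$ does not exhaust $\Pi^*_\theta(w)$; at a minimum you would have to show the omitted strata are negligible, which you do not. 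The bound $\lfloor 2\theta\rfloor$ is available only when a lift is attached to a single matched pair, via $|\pi(i-1)+\pi(i)-\pi(j-1)-\pi(j)|\le 2n-2$: this is exactly how the paper defines $\gamma$ (one coordinate $\gamma_s$ per match), after which the constants $a_i^{(\gamma)}$ are the induced integer combinations and are \emph{not} claimed to lie in $\mathcal{A}_\theta$. (A smaller slip of the same kind: ``$c_j$ chosen uniquely so that $\pi(j)\in\{1,\dots,n\}$'' is only true given the circuit; for fixed generating vertices there are up to about $\theta$ admissible representatives, which is precisely the source of $p_\theta(w)>1$ when $\theta>1$.)

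Second, the appeal to a ``uniform Riemann-sum convergence estimate'' glosses over the one place where the paper needs a separate argument. When $2\theta$ is an integer, the per-match lifts realizable at finite $n$ form $\{0,\pm1,\dots,\pm K_{\theta,n}\}$ with $K_{\theta,n}=\lfloor 2\theta\rfloor$ along some subsequences of $n$ and $\lfloor 2\theta\rfloor-1$ along others, so the strata with some $\gamma_s=\pm\lfloor 2\theta\rfloor$ appear and disappear with $n$; existence of the limit along the full sequence requires showing these strata contribute $0$. The paper does this in its Case II by showing the corresponding circuits number $O(n^k)$, whence the associated integrals vanish. Your degeneracy observation for the closing constraint is the right tool and does extend here (for such $\gamma$ the range indicators are satisfiable only on a face of the cube, and lattice points within $O(1/n)$ of such a face number $O(n^k)$), but this must be stated and proved rather than absorbed into ``boundary error''. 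With the lift bookkeeping corrected and this boundary case spelled out, your argument coincides with the paper's proof.
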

Now building on Theorem \ref{thm:repr}, and using Lemma \ref{mainlemma} we obtain the following.
\begin{theorem}\label{thm:lsd}
Fix $\theta \in (0,\infty)$, and let the input sequence satisfy Assumptions $1$, $2$ or $3$. Then the following hold.

\noindent
(i) The LSD (say   $F_\theta$) of $A_n^\theta$   exists. Moreover denoting $\beta_h^{(\theta)}= \int x^h dF_\theta(x)$, we have that $$\beta_{2k-1}^{(\theta)}=0, \ \ \text{and }\ \ \beta_{2k}^{(\theta)}= \sum_{w \in \mathcal{W}_{2k}^{(p)}} p_\theta(w), \text{ for all }k \ge 1,$$ where $p_\theta(w)$ is given by (\ref{gen_hankel_word_limit}).\vskip5pt

\noindent
(ii) If $\theta_m \rightarrow \theta \in (0, \infty)$, then  $F_{\theta_m}$ converge weakly to $F_\theta$.
\end{theorem}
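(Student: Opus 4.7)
The plan is to establish Part (i) by directly verifying the three hypotheses of Lemma \ref{mainlemma} with the candidate moment sequence $\beta_h^{(\theta)} := \sum_{w \in \mathcal{W}_h^{(p)}} p_\theta(w)$ for $h$ even (so $\beta_h^{(\theta)} = 0$ for $h$ odd, since no odd-length circuit can be pair-matched; this gives both the vanishing of odd moments and the symmetry of $F_\theta$ about zero). For Part (ii) the strategy is to combine the method of moments with Carleman's criterion: once the even moments $\beta_{2k}^{(\theta)}$ are known to depend continuously on $\theta$, weak convergence $F_{\theta_m} \Rightarrow F_\theta$ follows automatically. As explained in the text, the entire argument can be carried out under Assumption 1, with the extension to Assumptions 2 and 3 by the standard truncation argument of \cite{bose2008another}.

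For hypothesis (i) of the lemma, the moment-trace formula and the pair-matching reduction described immediately before Theorem \ref{thm:repr} already give
\[
\lim_n \E[\beta_{2k}(A_n^\theta)] = \sum_{w \in \mathcal{W}_{2k}^{(p)}} \lim_n n^{-(k+1)} \#\Pi^*_\theta(w) = \sum_{w} p_\theta(w),
\]
while odd moments tend to zero because no pair-matched circuit of odd length exists and all non-pair-matched circuits are negligible by \cite[Lemma 1]{bose2008another}. Hypothesis (ii), the fourth-moment fluctuation bound, is a standard patterned-matrix computation: expand $\E[\beta_h(A_n^\theta) - \E\beta_h(A_n^\theta)]^4$ as a sum over 4-tuples of circuits $(\pi_1,\ldots,\pi_4)$, use independence and zero mean to force cross-matching between consecutive circuits, and then apply Property B together with the Bose--Sen vertex count \cite[Lemma 2]{bose2008another} to conclude that the total contribution is $O(n^{-2})$. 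Finally, Hypothesis (iii), Carleman's condition, is immediate from the trivial bound
\[
p_\theta(w) \leq |\mathcal{A}_\theta|^k \cdot \theta^{-(k+1)} \cdot \theta^{|S(w)|} = |\mathcal{A}_\theta|^k \leq (4\theta+1)^k
\]
obtained from the integral representation in Theorem \ref{thm:repr} (since $|S(w)| = k+1$ and each indicator is bounded by $1$), which gives $\beta_{2k}^{(\theta)} \leq (2k-1)!!\,(4\theta+1)^k$, easily satisfying Carleman.

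For Part (ii), fix $\theta \in (0, \infty)$ and a sequence $\theta_m \to \theta$. The bound $\beta_{2k}^{(\theta_m)} \leq (2k-1)!!\,(4\theta_m+1)^k$ is locally uniform in $m$, so Carleman holds uniformly along the sequence; it therefore suffices by the method of moments to prove $p_{\theta_m}(w) \to p_\theta(w)$ for every $w \in \mathcal{W}_{2k}^{(p)}$. Inspecting the integral formula \eqref{gen_hankel_word_limit}, the prefactor $\theta^{-k-1}$ and the rectangular integration domain $[0,\theta]^{k+1}$ are continuous in $\theta$, the linear functions $L_i^H$ do not depend on $\theta$, and the indicators depend continuously on $\theta$ away from a $\nu$-set of Lebesgue measure zero. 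The main obstacle is the discontinuous dependence of the index set $\mathcal{A}_\theta = \{0, \pm 1, \ldots, \pm \lfloor 2\theta \rfloor\}$, which jumps at half-integer $\theta$. When $\theta$ is not a half-integer, $\mathcal{A}_{\theta_m}$ is eventually equal to $\mathcal{A}_\theta$ and dominated convergence finishes the proof. When $\theta = m_0/2$, the additional indices $\gamma$ that can appear in $\mathcal{A}_{\theta_m}$ but not $\mathcal{A}_\theta$ correspond, via the explicit form of the shifts $a_i^{(\gamma)}$ underlying Theorem \ref{thm:repr}, to constraints $L_i^H(\nu_{S(w)}) + a_i^{(\gamma)} \in (0, \theta_m)$ that force $\nu_{S(w)}$ into a boundary region of Lebesgue measure $O(\theta_m - \theta)$; these extra contributions vanish in the limit, and dominated convergence again closes the argument.
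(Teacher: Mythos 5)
Your part (i) is essentially the paper's own argument: you verify conditions (i)--(iii) of Lemma \ref{mainlemma}, getting the even moments from Theorem \ref{thm:repr} and the moment--trace reduction, the vanishing odd moments from the absence of odd pair-matched circuits, the fourth-moment bound by the standard cross-matching count, and Carleman from the crude bound $p_\theta(w)\le|\mathcal{A}_\theta|^k\le(4\theta+1)^k$, so $\beta_{2k}^{(\theta)}\le(2k-1)!!\,(4\theta+1)^k$; the paper does the same, quoting Lemma 1(b)(i), Lemma 2 and Theorem 3 of \cite{bose2008another} where you sketch the estimates yourself. For part (ii), your reduction to $p_{\theta_m}(w)\to p_\theta(w)$ (all that is really needed is that $F_\theta$ is moment-determinate, not uniform Carleman) and your treatment of $2\theta\notin\Z$ --- eventually $\mathcal{A}_{\theta_m}=\mathcal{A}_\theta$, then continuity of each term using that the $L_i^H$ are non-trivial affine maps --- also coincide with the paper.

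The gap is in the case $2\theta=m_0\in\Z$. Near such a $\theta$ one always has $\mathcal{A}_{\theta_m}\subseteq\mathcal{A}_\theta$: for $\theta_m\ge\theta$ the sets coincide, and for $\theta_m<\theta$ one has $\mathcal{A}_{\theta_m}=\widetilde{\mathcal{A}}_\theta=\{0,\pm1,\dots,\pm(m_0-1)\}$. So the set of ``additional indices in $\mathcal{A}_{\theta_m}$ but not in $\mathcal{A}_\theta$'' that your argument disposes of is empty, and the delicate direction --- left continuity, the only one the paper's proof actually has to work for --- is not addressed as written. What must be shown is that the terms of (\ref{gen_hankel_word_limit}) \emph{at $\theta$ itself} with some $\gamma_s=\pm m_0$ have zero integral, i.e.\ that $p_\theta(w)$ is also given by the reduced formula (\ref{eq:p_theta_3}); only then does termwise convergence over $\widetilde{\mathcal{A}}_\theta^k$ give $p_{\theta_m}(w)\to p_\theta(w)$ as $\theta_m\uparrow\theta$. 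Your measure estimate is the right tool, just aimed at the wrong side: if $\gamma_s=\pm2\theta$, then on the support of the integrand all coordinates (generating by the domain, non-generating by the indicators) lie in $(0,\theta')$, and the relation $\nu_{i_s-1}+\nu_{i_s}-\nu_{j_s-1}-\nu_{j_s}=\pm2\theta$ forces the generating coordinate $\nu_{i_s}$ into an interval of length $O(\theta'-\theta)$, so such a term is $O(\theta'-\theta)$ for $\theta'>\theta$ and exactly $0$ at $\theta'=\theta$; stating this for $\theta'=\theta$ closes the argument. The paper obtains the same vanishing differently, at finite $n$, via the counting bound $\#\Pi^{**,\pm}_{\theta,i}(w)=O(n^k)$ inside the proof of Theorem \ref{thm:repr}, and then simply quotes it in the proof of (ii). With your half-integer step redirected in this way the proof is complete; as literally written it does not establish left continuity.
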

Since the input sequence satisfies Property B for all $\theta \in (0, \infty)$, using Lemma 1(b)(i) of \cite{bose2008another}, one concludes that $\beta_{2k-1}^{(\theta)}=0$ for all $k \ge 1$. Therefore this, together with Theorem \ref{thm:repr}, establishes   condition (i) of Lemma \ref{mainlemma}. Furthermore, from Lemma 2 and Theorem 3 of \cite{bose2008another}  conditions (ii) and (iii) of Lemma \ref{mainlemma} follow. Thus combining all the steps we have Theorem \ref{thm:lsd}(i).
 The proof of Theorem \ref{thm:lsd}(ii) is provided in Section \ref{sec:thelsd}. 

By adapting the techniques of \cite{bryc2006spectral,bose2008another}  we can conclude that the limit has unbounded support and is not unimodal for all $\theta$. We omit the details of these arguments. 

It seems hard to deduce any further properties of the LSD for general $\theta \in (0,\infty)$. Nevertheless, with a combinatorial argument, we are able to identify $F_\theta$ when $\theta$ is an integer.  
It is known that $F_1$ is the symmetrized  standard Rayleigh distribution  \cite[see][]{bose2002limiting}. Let  $\mathcal{R}\sim F_1$. 
The following  theorem describes $F_\theta$ for integer $\theta$. 
Its proof is provided in Section \ref{sec:wordlimits}.

\begin{theorem}\label{thm:theta_integer}
Let $\theta$ be a positive integer. Suppose $X_\theta \sim F_\theta$. 
Let  $ B \sim \text{Ber}\Big(\frac{1}{\theta}\Big)$ 
and $B$ and $\mathcal R$ be  independent. 
Then \ $X_\theta \stackrel{d}{=} B \sqrt{\theta} \mathcal{R}$.
\end{theorem}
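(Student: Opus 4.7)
The plan is to exploit an exact Kronecker product decomposition of $A_n^\theta$ that arises only when $\theta$ is a positive integer. Since by Theorem \ref{thm:lsd}(i) the LSD $F_\theta$ already exists almost surely, it suffices to identify the weak limit of $F^{A_n^\theta}$ along any single subsequence; I will work along $n_m = \theta m$, $m \to \infty$.

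For $n = \theta m$, parametrize $\{1,\ldots,n\}$ by pairs $(p,i')$ with $p\in\{0,\ldots,\theta-1\}$ and $i'\in\{1,\ldots,m\}$ via $pm+i' \mapsto (p,i')$. Since $\lfloor n/\theta\rfloor = m$, a direct computation gives
\[
L_\theta(pm+i',\, qm+j') \;=\; (pm+i'+qm+j')\bmod m \;=\; (i'+j')\bmod m,
\]
which is independent of $(p,q)$. Hence $A_n^\theta$, viewed as a $\theta\times\theta$ array of $m\times m$ blocks, has identical blocks, each equal to $n^{-1/2}((x_{(i'+j')\bmod m}))_{i',j'=1}^m = \theta^{-1/2} RC_m$. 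Therefore
\[
A_n^\theta \;=\; \tfrac{1}{\sqrt{\theta}}\, J_\theta\otimes RC_m,
\]
where $J_\theta$ denotes the $\theta\times\theta$ all-ones matrix.

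The matrix $J_\theta$ has spectrum $\{\theta, 0,\ldots,0\}$ with $0$ of multiplicity $\theta-1$, so the standard Kronecker eigenvalue identity gives that the eigenvalues of $A_n^\theta$ are $\sqrt{\theta}\,\lambda_b(RC_m)$ for $b=1,\ldots,m$, together with $0$ of multiplicity $(\theta-1)m$. Consequently,
\[
F^{A_n^\theta} \;=\; \frac{\theta-1}{\theta}\,\delta_0 \;+\; \frac{1}{\theta}\, F^{\sqrt{\theta}\,RC_m}.
\]
Since $F^{RC_m}\Rightarrow \mathrm{Law}(\mathcal{R})$ almost surely, rescaling yields $F^{\sqrt{\theta}\,RC_m}\Rightarrow\mathrm{Law}(\sqrt{\theta}\,\mathcal{R})$ almost surely, and hence
\[
F^{A_n^\theta} \;\Rightarrow\; \frac{\theta-1}{\theta}\,\delta_0 \;+\; \frac{1}{\theta}\,\mathrm{Law}(\sqrt{\theta}\,\mathcal{R}) \;=\; \mathrm{Law}(B\sqrt{\theta}\,\mathcal{R})
\]
for $B\sim\text{Ber}(1/\theta)$ independent of $\mathcal{R}$. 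Comparing this subsequential limit with the a.s.\ limit $F_\theta$ furnished by Theorem \ref{thm:lsd}(i) identifies the two and completes the proof.

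The only step requiring genuine verification is the identification $A_n^\theta = \theta^{-1/2}J_\theta\otimes RC_m$ (and in particular that $L_\theta$ really depends only on the intra-block coordinates when $n=\theta m$); everything after that is routine Kronecker calculus combined with the known Reverse Circulant LSD. The restriction to the divisible subsequence is harmless because existence of the LSD is already in hand, which is precisely what makes this argument work only for integer $\theta$: for non-integer $\theta$ no such exact block decomposition is available, and one must instead work with the integral representation of $p_\theta(w)$ from Theorem \ref{thm:repr}.
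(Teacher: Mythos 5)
Your proof is correct, and it takes a genuinely different route from the paper. The paper stays entirely inside the moment/word-limit machinery: it notes $p_\theta(w)=0$ for non-symmetric words, and then, working along the subsequence $n=\theta n_1$, counts circuits directly to show $\#\Pi^*_\theta(w)=n^{1+k}\theta^{k-1}$ for every symmetric pair-matched $w$ of length $2k$ (the delicate point being that exactly one of the $\theta$ admissible choices of $\pi(2k)$ closes the circuit), so that $\beta_{2k}^{(\theta)}=k!\,\theta^{k-1}$, which are precisely the moments of $B\sqrt{\theta}\mathcal{R}$; subsequence-independence of the limit is imported from the proof of Theorem \ref{thm:repr}. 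You instead observe the exact algebraic identity $A_n^\theta=\theta^{-1/2}J_\theta\otimes RC_m$ when $n=\theta m$ (which is correct: $pm+i'+qm+j'\equiv i'+j' \pmod m$, and $n^{-1/2}=\theta^{-1/2}m^{-1/2}$), read off the spectrum via the Kronecker eigenvalue identity, and invoke the known Reverse Circulant LSD together with the already-established existence of $F_\theta$ from Theorem \ref{thm:lsd}(i) to identify the limit from the single divisible subsequence; there is no circularity, since Theorem \ref{thm:lsd}(i) does not depend on Theorem \ref{thm:theta_integer}. Your argument is shorter and more structural -- it makes the atom $1-1/\theta$ at zero transparent as rank deficiency, in the spirit of (but stronger than) Remark \ref{rem:prop_zero}, at the cost of using the known identification of $F_1$ as the symmetrized Rayleigh law; the paper's combinatorial count is heavier but yields the per-word information $p_\theta(w)=\theta^{k-1}$ for every symmetric word, which meshes with Corollary \ref{cor4:cat} and the rest of the word-limit analysis, and only needs $F_1$ through its moments $k!$.
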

\begin{remark}\label{rem:prop_zero}
It follows from Theorem~\ref{thm:theta_integer} that the proportion of zero eigenvalues in $A_n^{\theta}$, $\theta \in \N$ converges almost surely to $1-1/\theta$ as $n \rightarrow \infty$. The same conclusion continues to hold for non-integer values of $\theta$ as well. Indeed, note that the $\lfloor n/\theta \rfloor \times  \lfloor n/\theta \rfloor$ principal submatrix of $A_n^\theta$, denoted by $A_n^{\theta,1}$, is a $\lfloor n/\theta \rfloor \times  \lfloor n/\theta \rfloor$ Reverse Circulant matrix, and that $\text{rank}(A_{n}^{\theta,1})/\lfloor n/\theta \rfloor \xrightarrow{a.s.} 1-P(\mathcal{R}=0)=1$. Now since $\text{rank}(A_n^\theta)=\text{rank}(A_n^{\theta,1})$, one deduces that the proportion of zero eigenvalues in $A_n^{\theta}$ equals $1-\text{rank}(A_n^{\theta,1})/n\stackrel{a.s.}\sim 1-\lfloor n/\theta \rfloor/n\sim 1-1/\theta.$
\end{remark}
\begin{remark}
It follows from Remark~\ref{rem:prop_zero} that $F_{\theta}$ can be represented as
\[
F_{\theta}(\cdot)=\left(1-\frac{1}{\theta}\right)\Delta_0 (\cdot)+\frac{1}{\theta}G_{\theta}(\cdot),
\]
for some distribution function $G_{\theta}(\cdot)$, where $\Delta_0(\cdot)$ is the distribution function corresponding to $\delta_0$. Theorem~\ref{thm:theta_integer} implies that for integer values of $\theta$, $G_{\theta}(\cdot)$ equals $F_1(\frac{1}{\sqrt{\theta}}\cdot)$, the distribution function of $\sqrt{\theta}\mathcal{R}$. From Example~\ref{exm:comp} it follows that for non-integer values of $\theta$, $G_{\theta}(\cdot)\ne F_1(\frac{1}{\sqrt{\theta}}\cdot)$.
\end{remark}
See Figure~\ref{fig:gen_hankel} for some simulations with different values of $\theta$. See Figure~\ref{fig:comparison} for a comparison of $G_{\theta}(\cdot)$ with $F_1(\frac{1}{\sqrt{\theta}}\cdot)$, for non-integer values of $\theta$.
\begin{figure}[!t]
\centering
$\begin{array}{cc}
\includegraphics[width=0.4\textwidth, height=5cm]{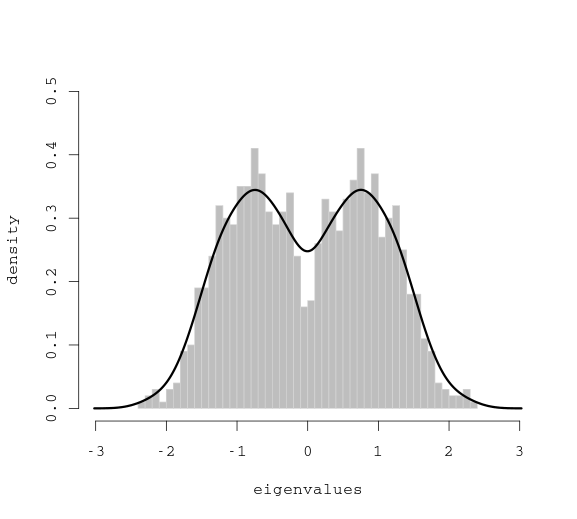}&
\includegraphics[width=0.4\textwidth, height=5cm]{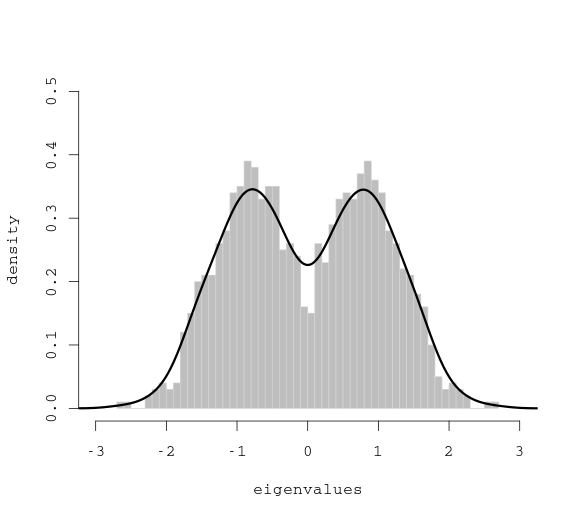}\\
\includegraphics[width=0.4\textwidth, height=5cm]{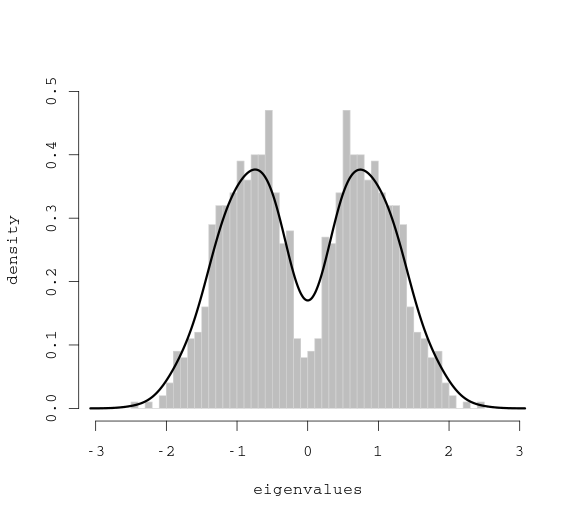}&
\includegraphics[width=0.4\textwidth, height=5cm]{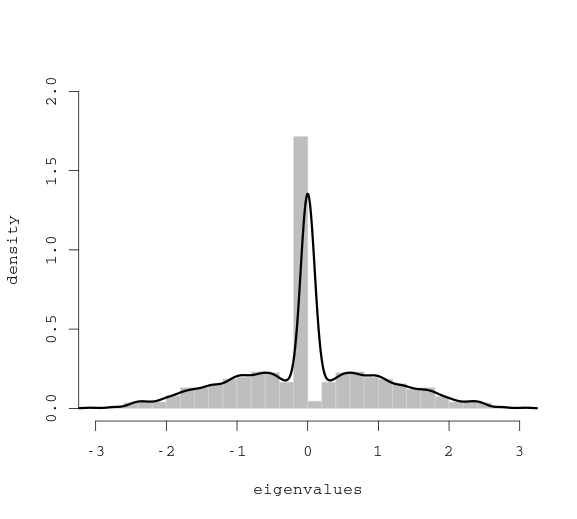}\\
\includegraphics[width=0.4\textwidth, height=5cm]{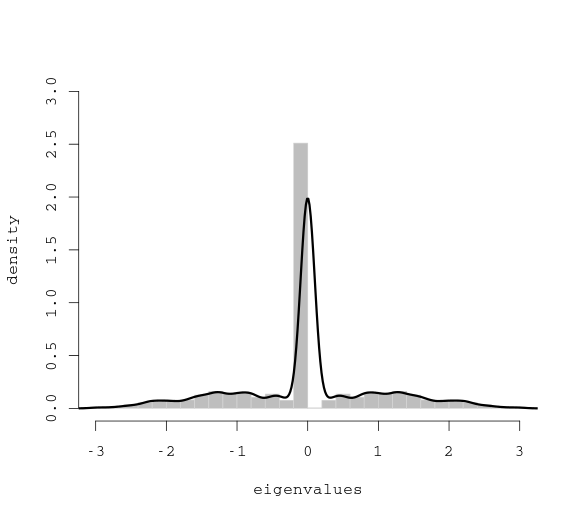}&
\includegraphics[width=0.4\textwidth, height=5cm]{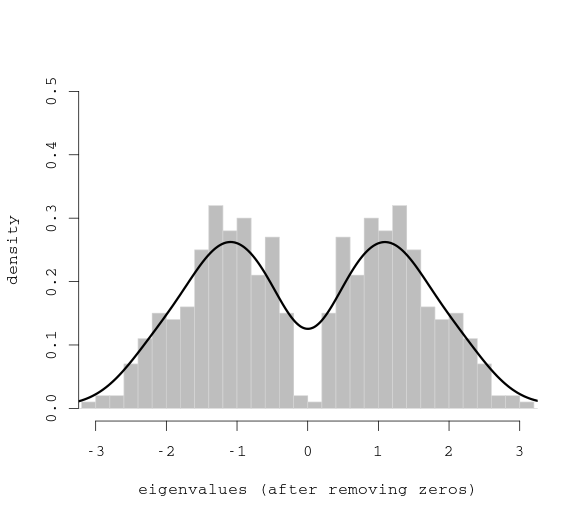}\\

\end{array}$
\caption{\footnotesize{\textit{Top left:} $\theta=0.5$, i.e. Hankel. \textit{Top right:} $\theta=0.75$. \textit{Middle left:} $\theta=1$, i.e. Reverse Circulant. \textit{Middle right:} $\theta=1.5$. \textit{Bottom left:} $\theta=2$. \textit{Bottom right:} $\theta=2$ with the zero eigenvalues removed. $\mathcal{N}(0,1)$ entries were used in each model with $n=1000$.}}
\label{fig:gen_hankel}
\end{figure}

\begin{figure}[!t]
\centering
$\begin{array}{cc}
\includegraphics[width=0.4\textwidth, height=5cm]{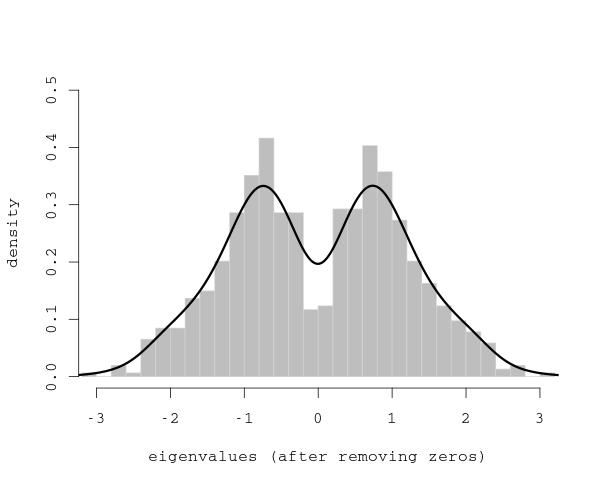}&
\includegraphics[width=0.4\textwidth, height=5cm]{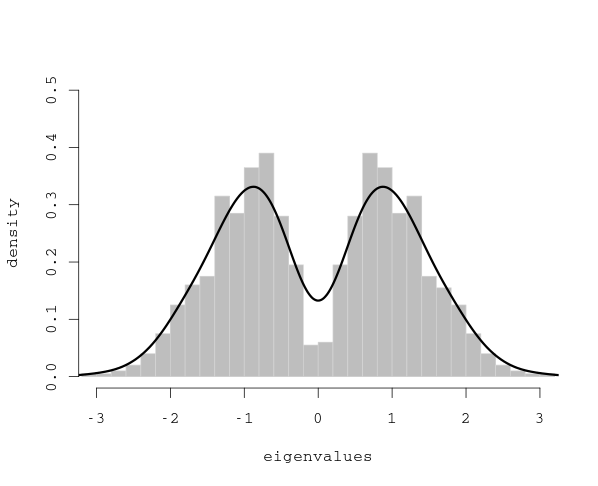}\\
\includegraphics[width=0.4\textwidth, height=5cm]{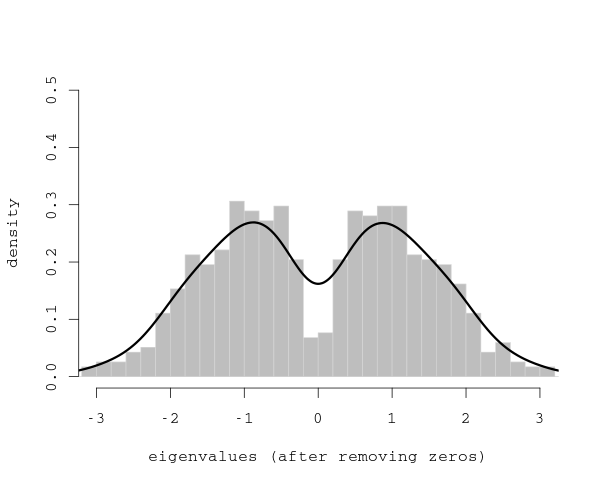}&
\includegraphics[width=0.4\textwidth, height=5cm]{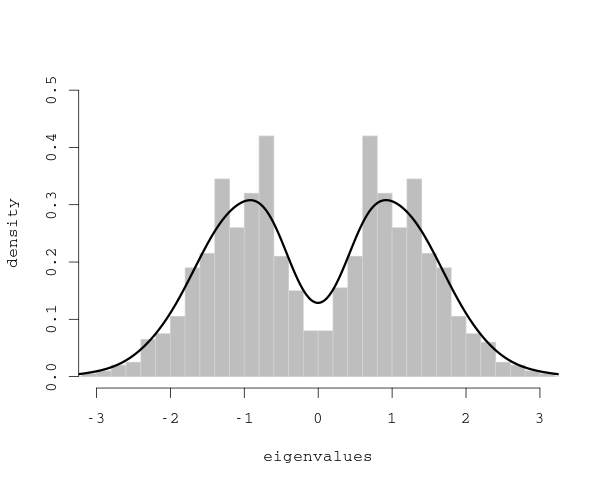}\\
\end{array}$
\caption{\footnotesize{\textit{Top left:} $\theta=1.3$, with zero eigenvalues removed. \textit{Top right:} ESD of $\sqrt{\theta}A_n^{1}$ (whose LSD is $F_1(\frac{1}{\sqrt{\theta}}\cdot)$), with $\theta=1.3$. \textit{Bottom left:} $\theta=1.7$, with zero eigenvalues removed. \textit{Bottom right:} ESD of $\sqrt{\theta}A_n^{1}$, with $\theta=1.7$. $\mathcal{N}(0,1)$ entries were used in each model with $n=1000$.}}
\label{fig:comparison}
\end{figure}

\section{The LSD}\label{sec:thelsd}
\begin{proof}[Proof of Theorem \ref{thm:repr}]
Consider a $\theta$-pair-matched circuit $\pi\in \Pi^*_\theta(w)$. 
If $w[i]=w[j]$ then 
\begin{align}\label{rel1}
& L_{\theta}(\pi(i-1),\pi(i))=L_{\theta}(\pi(j-1),\pi(j)) \nonumber \\ 
\Leftrightarrow & \,  \pi(i-1)+\pi(i) \equiv \pi(j-1)+\pi(j) (\text{mod}\lfloor n/\theta \rfloor).
\end{align}
If we define $\nu_i :=\pi(i)/\lfloor n/\theta \rfloor$ and $t_i := \nu_{i-1}+\nu_i$, then (\ref{rel1}) is equivalent to
\begin{equation}
t_i-t_j=r,
\end{equation}
where $r\in \{0, \pm 1, \pm 2, \ldots \pm K_{\theta,n}\}$, and $K_{\theta,n}\geqslant 0$ is the greatest integer satisfying the restriction
\begin{equation}
K_{\theta,n}\lfloor n/\theta \rfloor \leqslant 2n-2,
\end{equation}
which reflects the fact that $|\pi(i-1)+\pi(i)-\pi(j-1)-\pi(j)|\leqslant 2n-2$. The constraint on $K_{\theta,n}$ can be rewritten as
\begin{equation}\label{eq:bounds_on_K}
K_{\theta,n} \leqslant 2\frac{n}{\lfloor n/\theta \rfloor} - \frac{2}{\lfloor n/\theta \rfloor}.
\end{equation}
\textbf{Case I: $2\theta$ is not an integer.}  Note that $\frac{n}{\lfloor n/\theta \rfloor} \rightarrow \theta$ as $n\rightarrow \infty.$ In the present case $\{2\theta\}$, the fractional part of $2\theta$, is positive. We claim that for all sufficiently large $n$ we have
\begin{equation}
K_{\theta,n}=\lfloor 2\theta \rfloor.
\end{equation}
Indeed, on writing $n/\theta=\lfloor n/\theta \rfloor+\{n/\theta\}$, we have from inequality (\ref{eq:bounds_on_K})
\begin{align*}
K_{\theta,n} \leqslant 2\frac{n}{\lfloor n/\theta \rfloor} - \frac{2}{\lfloor n/\theta \rfloor}& =2\theta+\frac{2\theta\{n/\theta\}}{\lfloor n/\theta \rfloor}-\frac{2}{\lfloor n/\theta \rfloor}\\
&=\lfloor2\theta \rfloor + \{2\theta\}+\frac{2\theta\{n/\theta\}}{\lfloor n/\theta \rfloor}-\frac{2}{\lfloor n/\theta \rfloor}.
\end{align*}
By our assumption, $\{2\theta\}>0$ and hence for all large $n$,  $\lfloor n/\theta \rfloor >0$ and the part $2\theta\{n/\theta\}-2$ remains bounded. As a consequence, for all large enough $n$ we have
\begin{equation}\label{eq:nonneg}
0<\{2\theta\}+\frac{2\theta\{n/\theta\}}{\lfloor n/\theta \rfloor}-\frac{2}{\lfloor n/\theta \rfloor}<1.
\end{equation}
Therefore, for all large enough $n$,
\[
K_{\theta,n}=\lfloor 2\theta \rfloor.
\]

Let $(i_s, j_s)$, $1\leqslant s \leqslant k$ denote  all the $k$ matches in $w$.
Let $\gamma_s:=t_{i_s}-t_{j_s} \in \mathcal{A}_{\theta}$ and $\gamma:=(\gamma_s,\, 1\leqslant s \leqslant k)$. Arguing as in \cite[Theorem 8]{bose2008another} we obtain 
\[
\# \Pi^*_\theta(w)= \sum_{\gamma\in \mathcal{A}_{\theta}^k} \# \{ (\nu_0,\dots \nu_{2k})\mid \nu_0=\nu_{2k}, \, \nu_i \in U_{\theta,n}, \text{ and } t_{i_s}-t_{j_s} = \gamma_s\},
\]
where $U_{\theta,n}:=\{ 1/\lfloor n/\theta \rfloor, \dots , n/\lfloor n/\theta \rfloor \}$.
Furthermore, for $i\notin S(w)$, one can check that
\[
\nu_i=L_i^H(\nu_{S(w)})+a_i^{(\gamma)},
\]
where $L_i^H(\nu_{S(w)})$ denotes a certain linear combination in the variables $\{\nu_i, \, i\in S(w)\}$\footnote{More precisely $L_i^H(\cdot)$ is the linear map appearing in \cite[proof of Theorem 6]{bose2008another}.} and $a_i^{(\gamma,w)}:=a_i^{(\gamma)}$ is some integer dependent on $\gamma$, and possibly on the word $w$. Note that $U_{\theta, n}$ is a discrete approximation to the interval $(0,\theta)$. So, $\frac{1}{\lfloor n/\theta \rfloor^{1+k}}\# \Pi^*_\theta(w)$ is a Riemann approximation to the integral
\[
{\sum_{\gamma\in \mathcal{A}_{\theta}^k}\int_{0}^{\theta}\cdots \int_{0}^{\theta} \I\, (L_i^H(\nu_{S(w)})+a_i^{(\gamma)} \in (0,\theta); { \, i \notin S(w) \cup \{2k\}})\I(\nu_0=L_{2k}^H(\nu_S(w))+a_{2k}^{(\gamma)})\, \text{d}\nu_{S(w)}.}
\]
It follows, since $\frac{\lfloor n/\theta \rfloor}{n}\rightarrow \theta$, that $p_\theta(w)=\lim_n \frac{1}{n^{1+k}}\#\Pi^*_\theta(w)$ exists and is given by 
\[
\frac{1}{\theta^{k+1}}\sum_{\gamma\in \mathcal{A}_{\theta}^k}\int_{0}^{\theta}\cdots \int_{0}^{\theta} \I\, (L_i^H(\nu_{S(w)})+a_i^{(\gamma)} \in (0,\theta); { i \notin S(w) \cup \{2k\}})\I(\nu_0=L_{2k}^H(\nu_{S(w)})+a_{2k}^{(\gamma)})\, \text{d}\nu_{S(w)}.
\]
\textbf{Case II: $2\theta$ is an integer.}  Since $\{2\theta\}=0$, we do not have (\ref{eq:nonneg}) here. Therefore it is not immediate that $K_{\theta,n}=\lfloor 2 \theta \rfloor = 2 \theta$ for all large values of $n$. Indeed, we will see that $K_{\theta,n}$ can assume different values (namely $\lfloor 2 \theta\rfloor$ and $\lfloor 2 \theta \rfloor -1$) along different subsequences. We circumvent this problem by showing that the value of the integral in (\ref{gen_hankel_word_limit}) is $0$, if $\gamma_i= \pm \lfloor 2 \theta \rfloor$ for some $i \in S(w)$. We now proceed to prove these claims.
 
First consider an integer $n$ of the form $Mn_1+n_2$, where $\theta=M/2$ for some positive integer $M$, and $n_1, n_2$ are positive integers with $n_2<M/2$. Then (\ref{eq:bounds_on_K}) implies that 
 \[
 K_{\theta,n}\leqslant M+\frac{n_2-1}{n_1}.
 \]
 Since for all large $n$ of this form one has
 \[
 0\leqslant\frac{n_2-1}{n_1}<1,
 \]
 we conclude that for all large $n$ of this form we have
 \[
  K_{\theta,n}=M=\lfloor 2\theta \rfloor.
 \]
 So, the arguments in Case I apply verbatim for this subsequence 
 \[
 \{n_\ell^{(1)}\}=\{Mn_1+n_2\}_{\begin{subarray}{1} n_1>0\\0<n_2<M/2\end{subarray}},
 \] 
 and therefore the corresponding subsequential word limit exists and the limit is given by the formula (\ref{gen_hankel_word_limit}). Similarly considering the subsequence
 $$\{n_\ell^{(2)}\}=\{Mn_1+n_2\}_{\begin{subarray}{1} n_1>0\\M/2<n_2<M\end{subarray}},$$
it can be checked that $K_{\theta,n}= \lfloor 2 \theta \rfloor$, for all large $n$. Therefore the same conclusion holds. Now  for the subsequence 
\[
 \{n_\ell^{(3)}\}=\{Mn_1+n_2\}_{\begin{subarray}{1} n_1>0\\n_2 \in \{0,M/2\}\end{subarray}},
 \]
 it is not hard to verify that $K_{\theta,n}= \lfloor 2 \theta \rfloor -1$, for all large $n$. Thus repeating
 the same ideas as before, along the subsequence $\{n_\ell^{(3)}\}$, the limit of $ \frac{1}{n^{1+k}} \# \Pi_\theta^*(w)$ exists, and is given by
 \begin{align}
  p_\theta^{(3)}(w):= \frac{1}{\theta^{k+1}}\sum_{\gamma\in \widetilde{\mathcal{A}}_{\theta}^k}\int_{0}^{\theta}\cdots \int_{0}^{\theta} \I\, (L_i^H(\nu_{S(w)})& \, +a_i^{(\gamma)} \in (0,\theta); { i \notin S(w) \cup \{2k\}})\label{eq:p_theta_3}\\
& \,   \times \I(\nu_0=L_{2k}^H(\nu_{S(w)})+a_{2k}^{(\gamma)})\, \text{d}\nu_{S(w)}, \notag
 \end{align}
  where $\widetilde{\mathcal{A}}_\theta= \{0, \pm 1, \ldots, \pm (\lfloor 2 \theta \rfloor -1) \}$. We now proceed to prove that the integral in (\ref{gen_hankel_word_limit}) is $0$, if $\gamma_i = \pm \lfloor 2 \theta \rfloor$, for some $i \in S(w)$. First fix a pair-matched word $w$ and the subsequence $\{n_\ell^{(1)}\}$.  For all $i \in S(w)$, define
  \[
  \Pi^{**, \pm}_{\theta,i}(w):= \{\pi \in \Pi^*_\theta(w): \, \pi(i-1)+ \pi(i) - \pi(j-1)- \pi(j) = \pm \lfloor 2\theta \rfloor \lfloor n/ \theta \rfloor\},
  \]
  where $w[i]=w[j]$. Denoting 
  $$\Delta_{i-1} := n - \pi(i-1), \, \Delta_{i}:= n-\pi(i), \, \Delta_{j-1}:= \pi(j-1)-1, \text{ and } \Delta_j:= \pi(j) -1,$$
 we note that
 \begin{align}
& \, \,  \pi(i-1)+ \pi(i) - \pi(j-1)- \pi(j) =  \lfloor 2\theta \rfloor \lfloor n/ \theta \rfloor \notag\\
\Leftrightarrow & \, \, \Delta_{i-1}+\Delta_i + \Delta_{j-1}+ \Delta_j = 2n_2 -2,
 \end{align}
 for the subsequence $\{n_\ell^{(1)}\}$. Since all the $\Delta$'s are nonnegative, we deduce that the quadruple $(\pi(i-1), \pi(i), \pi(j-1), \pi(j))$ can be chosen only in $O(1)$ ways, which depend only on $M$. This in particular fixes the allowable choices of the generating vertex $\pi(i)$ to be $O(1)$, and thus by standard argument $\#\Pi_{\theta,i}^{**,+}(w)=O(n^k)$. By a similar argument one also has that $\#\Pi_{\theta,i}^{**,-}(w)=O(n^k)$. On the other hand, note that ${(\theta/n)^{1+k}} \# \Pi_{\theta, i}^{**, \pm}(w)$ is a discrete approximation of the integral
 $$\sum_{\begin{subarray}{1}\gamma \in \mathcal{A}_\theta^k\\\gamma_i = \pm \lfloor 2 \theta \rfloor\end{subarray}}\int_{0}^{\theta}\cdots \int_{0}^{\theta} \I\, (L_i^H(\nu_{S(w)})  +a_i^{(\gamma)} \in (0,\theta); { i \notin S(w) \cup \{2k\}})   \times \I(\nu_0=L_{2k}^H(\nu_{S(w)})+a_{2k}^{(\gamma)})\, \text{d}\nu_{S(w)}.
 $$
 Since $\#\Pi_{\theta,i}^{**, \pm}(w)= O(n^k)$,  it immediately follows that  the integral above is  zero. Since this holds for all $i \in S(w)$,  the limit for the subsequence $\{n_\ell^{(1)}\}$ must be as given in (\ref{eq:p_theta_3}). Similar conclusions can also be made for the subsequence $\{n_\ell^{(2)}\}$. We omit the details. Therefore we have shown that the limit along any subsequence is same, and is given by (\ref{eq:p_theta_3}). Therefore the proof is complete.
\end{proof}
\begin{remark}
For $\theta \leqslant 1/2$, since $\gamma_s$ can only take the value $0$, the integers $a_i^{(\gamma)}$ are all $0$. Therefore (\ref{gen_hankel_word_limit}) reduces to the  known formula for Hankel word limits upon a change of variable $\nu_j \rightarrow \nu_j/\theta,\, j\in S(w)$.
\end{remark}

\begin{proof}[Proof of Theorem \ref{thm:lsd}(ii)]
We will show that for all $h \ge 1$,  $\beta_h^{(\theta_m)} \rightarrow \beta_h^{(\theta)}$, whenever $\theta_m \rightarrow \theta \in (0,\infty)$. This will complete the proof since $\{\beta^{(\theta)}_h\}$ satisfies Carleman's condition. Noting that the functions $\{L_i^H(\cdot)\}_{i \notin S(w)}$ are all non-trivial linear functions, the conclusion is obvious when $2 \theta$ is not an integer. By a similar argument, when $2 \theta$ is an integer, one can immediately deduce the right continuity of $\beta_h^{(\theta)}$. For the left continuity, we begin by noting that $\lfloor 2 \theta_m\rfloor = \lfloor 2 \theta \rfloor -1$, for large $m$, and therefore
\begin{align*}
p_{\theta_m}(w)=  \frac{1}{\theta_m^{k+1}}\sum_{\gamma\in \widetilde{\mathcal{A}}_{\theta}^k}\int_{0}^{\theta_m}\cdots \int_{0}^{\theta_m} \I\, (L_i^H(\nu_{S(w)})& \, +a_i^{(\gamma)} \in (0,\theta_m); { i \notin S(w) \cup \{2k\}})\\
& \,   \times \I(\nu_0=L_{2k}^H(\nu_{S(w)})+a_{2k}^{(\gamma)})\, \text{d}\nu_{S(w)}.
\end{align*}
As noted in the proof of Theorem \ref{thm:repr}, for $2 \theta$ integer, the quantity $p_\theta(w)$ is also given by (\ref{eq:p_theta_3}), the conclusion follows by a similar argument as before.
\end{proof}

{\begin{remark}\label{genlink}
One might consider the following link functions
\begin{equation}
L_n(i,j)=i+j \text{ (mod }a_n),
\end{equation}
where $\{a_n\}$ is a sequence of positive integers. To prevent degeneracy we must have $a_n\rightarrow \infty$. Note that if $n/a_n\rightarrow 0$, we are back to the Hankel case. On the other hand if $n/a_n\rightarrow \theta$ for some positive real number $\theta$, such a link function will generalize $L_{\theta}$ (note that in this case we can write $a_n=\lfloor n/\theta \rfloor + o(n)$; thus, essentially such a link function is equivalent to $L_{\theta}$).  All our arguments can be modified, and Theorem \ref{thm:repr} continues to hold for such link functions. Finally, if $n/a_n\rightarrow \infty$, then the word limits diverge to $\infty$, when $k\geq 3$. This seems to be an interesting situation. However the moment method fails to shed any light here. Note that a different scaling will not work because with the scaling we have used, one still has $\beta_2=1$.
\end{remark}}
The integral in (\ref{gen_hankel_word_limit}) is very difficult to evaluate for arbitrary $w$, specially when $k$ is large. Below and also in Section \ref{sec:wordlimits}, we identify some specific words for which the integral is easy to compute, and thereby yields some more information about the limit.

A pair-matched word is said to be \textit{symmetric} if each letter appears once in an even position and once in an odd position. Examples include $abcabc$, $aabbcddc$ etc. In \cite{bose2008another} it is shown  that for the Hankel and Reverse Circulant matrices $p(w)=0$ for non-symmetric words. An adaptation of the same argument shows that for all $\theta$, $p_\theta(w)=0$, for all non-symmetric words $w$. We omit the mundane details. This observation immediately yields the following corollary and  proposition.
\begin{cor}[Super-Hankel limit] For all $\theta$, 
\[
\beta_{2k}^{(\theta)}\geqslant \beta_{2k}^{(1/2)}= \beta_{2k}^{(Hankel)}.
\]
\end{cor}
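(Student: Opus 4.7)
The plan is to establish the word-level inequality $p_\theta(w) \geq p_{1/2}(w)$ for every pair-matched word $w \in \mathcal{W}_{2k}^{(p)}$ and then sum. First, recall from the observation stated immediately before the corollary that $p_\theta(w) = 0$ for every non-symmetric pair-matched word $w$ and every $\theta \in (0,\infty)$. Hence in the expansion $\beta_{2k}^{(\theta)} = \sum_{w \in \mathcal{W}_{2k}^{(p)}} p_\theta(w)$ only symmetric words can contribute, both for $\theta$ and for $\theta = 1/2$, and it suffices to verify $p_\theta(w) \geq p_{1/2}(w)$ for each symmetric $w$.

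Fix such a symmetric $w$. Formula (\ref{gen_hankel_word_limit}) expresses $p_\theta(w)$ as a sum over $\gamma \in \mathcal{A}_\theta^k$ of integrals whose integrands are products of indicators, so every summand is non-negative. I would isolate the contribution of $\gamma = 0$: in that case all the offsets $a_i^{(\gamma)}$ vanish, and this term reads
\[
\theta^{-(k+1)} \int_{(0,\theta)^{k+1}} \I\bigl(L_i^H(\nu_{S(w)}) \in (0,\theta),\; i \notin S(w) \cup \{2k\}\bigr)\, \I\bigl(\nu_0 = L_{2k}^H(\nu_{S(w)})\bigr)\, d\nu_{S(w)}.
\]
The main computation is the linear change of variables $\nu_j \mapsto \theta \nu_j$ for $j \in S(w)$: the domain rescales from $(0,\theta)^{k+1}$ to $(0,1)^{k+1}$, the Jacobian $\theta^{k+1}$ cancels the prefactor $\theta^{-(k+1)}$, and by the homogeneity of each $L_i^H$ the threshold $\theta$ in both indicators is replaced by $1$. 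The resulting expression matches precisely the formula for the Hankel word limit $p_{1/2}(w)$, as already noted in the remark following the proof of Theorem~\ref{thm:repr}.

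Since all remaining summands in (\ref{gen_hankel_word_limit}), corresponding to $\gamma \in \mathcal{A}_\theta^k \setminus \{0\}$, contribute non-negatively, the inequality $p_\theta(w) \geq p_{1/2}(w)$ follows at once. Summing over symmetric $w$ yields $\beta_{2k}^{(\theta)} \geq \beta_{2k}^{(1/2)}$, while $\beta_{2k}^{(1/2)} = \beta_{2k}^{(\text{Hankel})}$ is immediate from the construction of the class $\{A_n^\theta\}$, since $L_{1/2}$ agrees with $L_H$ up to boundary effects that vanish in the limit. The only mildly delicate point is confirming that the linear forms $L_i^H(\cdot)$ together with the closure constraint $\nu_0 = L_{2k}^H(\nu_{S(w)})$ are genuinely homogeneous of degree one, so that the rescaling goes through cleanly; this is transparent from the explicit description of these maps in the Hankel analysis of \cite[Theorem 6]{bose2008another} which underlies Theorem~\ref{thm:repr}.
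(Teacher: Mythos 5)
Your proof is correct and follows essentially the same route as the paper: isolate the $\gamma=(0,\ldots,0)$ term in (\ref{gen_hankel_word_limit}), note that the offsets $a_i^{(\gamma)}$ vanish so the rescaling $\nu_j \mapsto \theta\nu_j$ turns that term into the Hankel word limit, and use non-negativity of the remaining summands. Your extra remarks (vanishing of non-symmetric words, homogeneity of the $L_i^H$) only make explicit what the paper leaves implicit.
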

\begin{proof}
In (\ref{gen_hankel_word_limit}) consider the term with $\gamma=\gamma_0:=(0,\ldots,0)$. Since we have $a_i^{(\gamma_0)}=0$ for each $i\notin S(w)\cup \{ 2k \}$, the term is easily seen to equal $\beta_{2k}^{(Hankel)}$ upon a change of variable $\nu_j \rightarrow \nu_j/\theta,\, j\in S(w)$.
\end{proof}
\begin{prop}[Sub-Reverse Circulant limit]
If $\theta\in (\frac{1}{2},1)$, then 
\[
\beta_{2k}^{(\theta)} \leqslant \beta_{2k}^{(1)}=\beta_{2k}^{(Rev.\,  Circ.)}= k!.
\]
\end{prop}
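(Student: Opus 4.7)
The plan is to exploit the fact, recalled just before the proposition, that $p_\theta(w) = 0$ for every non-symmetric pair-matched word $w$. Since there are exactly $k!$ symmetric pair-matched words of length $2k$ (they correspond to the $k!$ bijections between the $k$ odd positions and the $k$ even positions), we have
\[
\beta_{2k}^{(\theta)} = \sum_{\substack{w \in \mathcal{W}_{2k}^{(p)} \\ w \text{ symmetric}}} p_\theta(w),
\]
so it suffices to establish the pointwise bound $p_\theta(w) \leq 1$ for each symmetric $w$ whenever $\theta \in (1/2, 1)$; summing over the $k!$ symmetric words will then yield $\beta_{2k}^{(\theta)} \leq k! = \beta_{2k}^{(1)}$.

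To bound $p_\theta(w)$ I would bypass the integral representation of Theorem~\ref{thm:repr} and argue directly from the counting interpretation $p_\theta(w) = \lim_n n^{-(k+1)} \# \Pi^*_\theta(w)$. The strategy is to show that each of the $n^{k+1}$ generating tuples $(\pi(0), \pi(i_1), \ldots, \pi(i_k)) \in \{1, \ldots, n\}^{k+1}$ extends to at most one circuit in $\Pi^*_\theta(w)$. Indeed, processing the non-generating indices $j \in \{1, \ldots, 2k\} \setminus S(w)$ in increasing order, each $\pi(j)$ is forced by
\[
\pi(j) \equiv \pi(i-1) + \pi(i) - \pi(j-1) \pmod{\lfloor n / \theta \rfloor},
\]
where $(i, j)$, $i < j$, denotes the $L_\theta$-match involving $j$ and every quantity on the right-hand side is already determined by this stage. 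Since $\theta \in (1/2, 1)$ gives $\lfloor n/\theta \rfloor > n$ for all large $n$, and $\{1, \ldots, n\}$ has diameter $n - 1 < \lfloor n/\theta \rfloor$, any residue class modulo $\lfloor n/\theta \rfloor$ meets $\{1, \ldots, n\}$ in at most one point. Hence $\pi(j)$ is uniquely determined if it exists at all, and the circuit-closure constraint $\pi(2k) = \pi(0)$ only further restricts the count. This yields $\# \Pi^*_\theta(w) \leq n^{k+1}$ and therefore $p_\theta(w) \leq 1$, which suffices.

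I do not anticipate any serious obstacle: the argument is essentially direct counting. The crucial ingredient is the strict inequality $\lfloor n/\theta \rfloor > n$ for $\theta < 1$, which is exactly what forces uniqueness of the candidate non-generating value. For $\theta > 1$ the opposite inequality holds and multiple representatives per residue class become available, consistent with the earlier observation that $p_\theta(w) > 1$ is possible in that regime.
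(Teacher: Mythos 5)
Your proof is correct and follows essentially the same route as the paper: both reduce to the $k!$ symmetric words (non-symmetric words having zero limit) and then show each non-generating vertex admits at most one choice because $\lfloor n/\theta\rfloor \geqslant n$ makes residue classes modulo $\lfloor n/\theta\rfloor$ meet $\{1,\ldots,n\}$ at most once, giving $p_\theta(w)\leqslant 1$ per symmetric word.
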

\begin{proof}
In this case $\lfloor2\theta \rfloor=1$ and $\mathcal{A}_{\theta}=\{0,\pm 1\}$. Consider a symmetric word $w$. Following the argument in the Reverse Circulant case (see \cite{bose2008another}),  upon choosing the generating vertices, there is \emph{at most} one choice for each non-generating vertex. To elaborate, consider a match $(i,j)$  and suppose that $\pi(i-1), \pi(i)$, and $\pi(j-1)$ have been chosen. Now from the relation 
\[
\pi(j)=\pi(i-1)+\pi(i)-\pi(j-1)+r\lfloor n/\theta \rfloor,
\]
where $r\in \{0,\pm 1 \}$, since $\lfloor n/\theta \rfloor \geqslant n$  and $1 \leqslant \pi(j)\leqslant n$, it follows that there is at most one choice for $\pi(j)$. Therefore $\lim\frac{1}{n^{1+k}}\Pi^*_\theta(w)\leqslant 1$ and the stated assertion follows, because for Reverse Circulant each symmetric word contributes $1$.
\end{proof}

\section{Word limits}\label{sec:wordlimits}
 We can, in principle, obtain all the moments from  (\ref{gen_hankel_word_limit}). However, evaluating the integral therein is not an easy task, even in the Hankel case, and no explicit recursions or formulas are available for the moments. In this section,  we provide a recursion for $p_\theta(w)$, when $w$ is a Catalan word, and we also determine  $F_\theta$ completely, when $\theta$ is an integer.

A pair matched word is called \textit{Catalan}, if sequentially deleting all double letters reduces the word to an empty word (e.g. $aabb$, $abbcca$ etc.), and we let $\mathcal{C}_{2k}$ to be the class of all Catalan words of length $2k$. These are in bijection with the so called non-crossing pair-partitions (c.f. \cite{bose2008another}, and \cite[Proposition 2.1.11]{anderson2010introduction}).
Their importance in random matrix theory stems from the fact that for the Wigner matrix, $p(w)$ is non-zero only if the word is Catalan and  in which case $p(w)=1$. For Hankel, Toeplitz, Symmetric Circulant and Reverse Circulant matrices $p(w)$ continues to equal $1$ for all Catalan words. However, this does not remain true for all values of $\theta$. Nevertheless, using a direct counting approach we establish the following useful formula for Catalan words.
\begin{theorem}\label{thm:gen_hankel_cat}
If $w \in \mathcal{C}_{2k}$ then $p_\theta(w)$ is given by
\begin{equation}
p_\theta(w)=\sum_{j=0}^{k-1}A_j^{w} \lfloor \theta \rfloor^j (\lfloor \theta \rfloor +1)^{k-1-j},
\end{equation}
where the $A_j^{w}$ are some non-negative constants (depending on $\theta$) such that $\sum_{j=0}^{k-1}A_j^{w}=1.$
\end{theorem}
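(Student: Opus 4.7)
The plan is to exploit the nested structure of Catalan matchings to reduce $\#\Pi^*_\theta(w)$ to a sum of products of residue-class sizes, which directly produces a convex combination of the monomials $\lfloor\theta\rfloor^j(\lfloor\theta\rfloor+1)^{k-1-j}$. Writing $m := \lfloor n/\theta\rfloor$ and decomposing $n = qm + s$ with $0 \le s < m$, the number of elements of $\{1,\ldots,n\}$ in a given residue class mod $m$ is $q+1$ for $s$ classes and $q$ for the remaining $m-s$, and $q \to \lfloor\theta\rfloor$. These two values are precisely the $\lfloor\theta\rfloor+1$ and $\lfloor\theta\rfloor$ that appear in the statement.

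The heart of the argument is a structural lemma, to be proved by induction on $k$ via the first-return decomposition $w = c\, w_1\, c\, w_2$ (with $|w_1|=2k_1$, $|w_2|=2k_2$): for $w \in \mathcal{C}_{2k}$ and any $\pi \in \Pi^*_\theta(w)$, the matching constraints alone imply the mod-$m$ closure $\pi(2k) \equiv \pi(0) \pmod m$, and more generally every non-generating vertex satisfies $\pi(i) \equiv \pi(\sigma(i)) \pmod m$ for a generating index $\sigma(i) \in S(w)$ determined only by $w$, with $\sigma(2k) = 0$. The inductive step applies the hypothesis to the shorter Catalan words $w_1, w_2$, yielding $\pi(2k_1+1)\equiv\pi(1)$ and $\pi(2k)\equiv\pi(2k_1+2) \pmod m$; chaining with the outer match of $c$, which gives $\pi(2k_1+2)\equiv\pi(0) \pmod m$, produces $\pi(2k)\equiv\pi(0) \pmod m$. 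Iterating the same reduction inside each nested match then produces the map $\sigma$.

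Given the lemma, the count proceeds cleanly. Parametrising circuits by the $k+1$ free generating vertices $\pi(v) \in \{1,\ldots,n\}$, $v \in S(w)$, each non-generating $i \ne 2k$ ranges over $N_{\pi(\sigma(i))\bmod m}$ values (where $N_r := \#\{x \in \{1,\ldots,n\}: x \equiv r \pmod m\}$), while $\pi(2k)$ is pinned to $\pi(0)$, the match condition being automatically compatible since $\sigma(2k)=0$. Setting $c_v := \#(\sigma^{-1}(v) \setminus \{2k\})$, so that $\sum_{v \in S(w)} c_v = k-1$, one obtains
\[
\#\Pi^*_\theta(w) \;=\; \sum_{(\pi(v))_{v \in S(w)} \in \{1,\ldots,n\}^{k+1}} \prod_{v \in S(w)} N_{\pi(v)\bmod m}^{c_v}.
\]
Dividing by $n^{k+1}$, the $\pi(v)$'s become i.i.d.\ uniform in the limit, and $N_{\pi(v)\bmod m}$ converges in distribution to a variable taking the values $\lfloor\theta\rfloor+1$ and $\lfloor\theta\rfloor$ with probabilities $p_b := \{\theta\}(\lfloor\theta\rfloor+1)/\theta$ and $p_a := 1 - p_b$ respectively. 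Independence across $v$ and boundedness of $N$ then yield
\[
p_\theta(w) \;=\; \prod_{v \in S(w)} \bigl(p_b (\lfloor\theta\rfloor+1)^{c_v} + p_a \lfloor\theta\rfloor^{c_v}\bigr).
\]
Expanding this product and grouping by the total exponent $j := \sum_{v \notin T} c_v$ of $\lfloor\theta\rfloor$, where $T \subset S(w)$ indexes the factors where the $p_b$-branch is chosen, delivers the claimed formula with
\[
A_j^w \;=\; \sum_{\substack{T \subset S(w) \\ \sum_{v \notin T} c_v = j}} p_b^{|T|}\, p_a^{|S(w)|-|T|} \;\ge\; 0, \qquad \sum_{j=0}^{k-1} A_j^w \;=\; (p_a+p_b)^{|S(w)|} \;=\; 1.
\]

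The main obstacle is the structural lemma: its inductive hypothesis must be phrased purely in terms of the mod-$m$ matching constraints, since the inner sub-walks indexed by $w_1$ and $w_2$ are themselves only closed mod $m$ rather than exactly closed on the nose, and conflating the two would make the induction break. The case $2\theta \in \Z$ is handled exactly as in the proof of Theorem~\ref{thm:repr}, by restricting to the subsequences along which $K_{\theta,n}$ is constant; the limiting probabilities $p_a, p_b$ are unchanged, so the formula persists.
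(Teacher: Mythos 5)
Your argument is correct, and it takes a genuinely different route from the paper. The paper proves the theorem by induction on $k$: it deletes a double letter $w\mapsto\hat w$, shows that the two new vertices can be filled in $n(\lfloor\theta\rfloor+1)$ or $n\lfloor\theta\rfloor$ ways according to the residue class of the anchoring vertex, and thereby obtains a finite-$n$ recursion for coefficients $A^w_{j,n}$ together with a separate limit computation for the proportions $\alpha^w_{n,i_0}$ (which secretly uses the same dependence count $m^w_{i_0}$ as your $c_v$). You instead prove a global structural lemma via the first-return decomposition --- every non-generating vertex of a Catalan word is forced, mod $m=\lfloor n/\theta\rfloor$, into the residue class of a generating vertex $\sigma(i)$ depending only on $w$, with $\sigma(2k)=0$ --- and then count exactly: $\#\Pi^*_\theta(w)=\sum_{(\pi(v))_{v\in S(w)}}\prod_v N_{\pi(v)\bmod m}^{c_v}$, from which independence of the uniform generating vertices gives the closed-form product $p_\theta(w)=\prod_{v\in S(w)}\bigl(p_b(\lfloor\theta\rfloor+1)^{c_v}+p_a\lfloor\theta\rfloor^{c_v}\bigr)$. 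This is stronger than the paper's statement: it identifies the $A^w_j$ explicitly (and agrees with Table~\ref{table2} and Corollaries \ref{cor1:cat}--\ref{cor4:cat}), whereas the paper only exhibits them recursively; it also avoids the paper's size-biased bookkeeping in $\alpha^w_{i_0}$, since in your parametrisation the generating vertices stay uniform and independent. Two minor remarks: the degenerate case in the paper is $\theta\in\Z$ (governed by $C_{\theta,n}\leqslant (n-1)/\lfloor n/\theta\rfloor$), not $2\theta\in\Z$ with $K_{\theta,n}$, which is relevant only for Theorem \ref{thm:repr}; and in fact your scheme needs no subsequence argument at all, because your count is exact for every $n$ and $q=\lfloor n/\lfloor n/\theta\rfloor\rfloor=\theta$ with $s/n\to 0$ eventually along the full sequence when $\theta\in\Z$, so $p_b=0$, $p_a=1$ and the formula $p_\theta(w)=\theta^{k-1}$ comes out directly. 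You are also right to insist that the inductive hypothesis of the structural lemma be stated purely in terms of mod-$m$ congruences; that is exactly the point where a naive induction on exactly closed sub-circuits would fail.
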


\begin{proof}
\textbf{Case I: $\theta$ is not an integer.}
We first prove a ``finite'' version of the result. Suppose $w \in \mathcal{C}_{2k}$. Then we claim that for all large enough $n$ we have
\begin{equation}
\frac{1}{n^{1+k}}\#\Pi^*_\theta(w)=\sum_{j=0}^{k-1}A_{j,n}^{w} \lfloor \theta \rfloor^j (\lfloor \theta \rfloor +1)^{k-1-j},
\end{equation}
where the $A_{j,n}^{w}$ are non-negative constants (depending on $\theta$) adding up to $1$.
We shall use induction on $k$ to prove the claim. The stated assertion is trivially true when $k=1$ (the only word in $\mathcal{C}_{2}$ is $aa$ and $\frac{1}{n^{2}}\#\Pi^*_\theta(aa)=1$). So, suppose that the assertion is true for $(k-1)$, where $k\geqslant 2$. Let $w\in \mathcal{C}_{2k}$. As each Catalan word has a double letter, suppose that $w[i_0]=w[i_0+1]$, for some $i_0\in\{1,\ldots,2k\}$. This implies that
\begin{align}\label{eq:definition_of_r}
\nonumber & \pi(i_0-1)+\pi(i_0)\equiv \pi(i_0)+\pi(i_0+1) (\text{mod $\lfloor n/\theta \rfloor$})\\
\nonumber \Rightarrow & \pi(i_0-1)-\pi(i_0+1)\equiv 0 (\text{mod $\lfloor n/\theta \rfloor$})\\
\Rightarrow & \pi(i_0-1)-\pi(i_0+1)=r\lfloor n/\theta \rfloor,  
\end{align}
for some integer $r \in \{0,\pm1,\ldots,\pm C_{\theta,n}\}$, where $C_{\theta,n}\geqslant0$ is the maximum integer satisfying (\ref{eq:definition_of_r}) and not violating the automatic restriction $|\pi(i_0-1)-\pi(i_0+1)|\leqslant n-1$.  Hence
\begin{align}
\nonumber C_{\theta,n} & \leqslant \frac{n}{\lfloor n/\theta \rfloor}-\frac{1}{\lfloor n/\theta \rfloor}\\
\nonumber &=\theta+\frac{\theta\{n/\theta\}}{\lfloor n/\theta \rfloor}-\frac{1}{\lfloor n/\theta \rfloor}\\
&=\lfloor\theta\rfloor + \{\theta\}+\frac{\theta\{n/\theta\}}{\lfloor n/\theta \rfloor}-\frac{1}{\lfloor n/\theta \rfloor}.
\end{align}
By our assumption $\{\theta\}>0$. So, for all large enough $n$ we have
\[
0<\{\theta\}+\frac{\theta\{n/\theta\}}{\lfloor n/\theta \rfloor}-\frac{1}{\lfloor n/\theta \rfloor}<1.
\]
Therefore for all large enough $n$
\begin{equation}\label{eq:large_C}
C_{\theta,n}= \lfloor\theta \rfloor.
\end{equation}
In the rest of the proof, we assume that $n$ is large enough so that (\ref{eq:large_C}) holds. Now since
\[
\pi(i_0+1)=\pi(i_0-1)-r\lfloor n/\theta \rfloor,
\]
for a fixed $\pi(i_0-1)$, there are either $\lfloor\theta \rfloor$ or $\lfloor\theta \rfloor+1$ many choices for $\pi(i_0+1)$. To see this, first note that, for large values of $n$ (possibly depending on $\theta$), we can write $n=\lfloor\theta \rfloor\lfloor n/\theta \rfloor + \xi_n$, where $0 < \xi_n < \lfloor n/\theta \rfloor.$ Denoting by $E_i$ the set $\{i\lfloor n/\theta \rfloor+1,i\lfloor n/\theta \rfloor+2,\ldots,i\lfloor n/\theta \rfloor+\lfloor n/\theta \rfloor\}$, we note that the set $\{1,2,\ldots,n\}$ can be partitioned as
\begin{equation}
\{1,2,\ldots,n\}=\left(\bigcup_{i=0}^{\lfloor\theta \rfloor-1}E_i\right) \cup \{\lfloor\theta \rfloor\lfloor n/\theta \rfloor+1,\ldots,n\}.
\end{equation}
Define $r_{i_0}^\theta(\pi):=\pi(i_0-1)(\text{mod }\lfloor n/\theta \rfloor)$. If $1\leqslant r_{i_0}^\theta(\pi) \leqslant \xi_n$, then clearly $\pi(i_0-1)$ can be in any one of the $\lfloor\theta \rfloor+1$ partition-blocks. Then $r$ can take one of exactly $\lfloor\theta \rfloor+1$ possible values so as to yield a feasible value for $\pi(i_0+1)$. On the other hand, if $\xi_n+1\leqslant r_{i_0}^\theta(\pi) \leqslant \lfloor n/\theta \rfloor-1$, then, by a similar reasoning, there are exactly $\lfloor\theta \rfloor$ many valid choices for $r$.

Now, deleting the first double letter from left, we obtain another Catalan word $\hat{w}\in \mathcal{C}_{2k-2}$. We now note that a circuit  $\hat{\pi}\in \Pi^*_\theta(\hat{w})$ gives rise to circuits $\pi \in \Pi^*_\theta(w)$ in the following way:
\[
\pi(i):=\begin{cases}
\hat{\pi}(i), & \text{ if $0\leqslant i\leqslant i_0-1$},\\
f_1, & \text{ if $i=i_0$},\\
f_2, & \text{ if $i=i_0+1$}, and\\
\hat{\pi}(i-2), & \text{ if $i_0+2\leqslant i\leqslant 2k$},
\end{cases}
\]
where $f_1$ is chosen arbitrarily in $n$ ways and $f_2$ is chosen in either $\lfloor\theta \rfloor+1$ or $\lfloor\theta \rfloor$ ways, according as whether $r_{i_0}^\theta(\hat{\pi})\in \{1,\ldots,\xi_n\}$ or $r_{i_0}^\theta(\hat{\pi})\in \{\xi_n+1,\ldots,\lfloor n/\theta \rfloor-1\}$. Thus if $r_{i_0}^\theta(\hat{\pi})\in \{1,\ldots,\xi_n\}$, then 
$\hat{\pi}$ gives rise to exactly $n(\lfloor\theta \rfloor+1)$ circuits $\pi \in \Pi^*_\theta(w)$ and if $r_{i_0}^\theta(\hat{\pi})\in \{\xi_n+1,\ldots,\lfloor n/\theta \rfloor -1\}$, then the number of such circuits is $n\lfloor\theta \rfloor$. For such circuits $\pi$,  we have $r_{i_0}^\theta(\pi)=r_{i_0}^\theta(\hat{\pi})$. For notational simplicity, let $r_{i_0}^\theta$ denote the common value.  Further let $d_{n,i_0}^{w}$  be the number of circuits $\pi\in\Pi^*_\theta(w)$ for which $r_{i_0}^\theta\in \{1,\ldots,\xi_n\}$, and define
\[
\alpha_{n,i_0}^{w}:=\frac{d_{n,i_0}^{w}}{\# \Pi^*_\theta(w)}.
\]
From the above discussion it is clear that
\begin{align*}
\#\Pi^*_\theta(w) & =n (\lfloor\theta \rfloor+1) d_{n,i_0}^{\hat{w}}  + n \lfloor\theta \rfloor (\# \Pi^*_\theta(\hat{w})-d_{n,i_0}^{\hat{w}}) \\
& = n \# \Pi^*_\theta(\hat{w}) (\alpha_{n,i_0}^{\hat{w}}(\lfloor\theta \rfloor+1) + (1-\alpha_{n,i_0}^{\hat{w}})\lfloor\theta \rfloor).
\end{align*}
Therefore
\begin{equation}\label{eq:recurrence_gen_hank_cat}
\frac{1}{n^{1+k}}\#\Pi^*_\theta(w) =\frac{1}{n^k}\# \Pi^*_\theta(\hat{w}) (\alpha_{n,i_0}^{\hat{w}}(\lfloor\theta \rfloor+1) + (1-\alpha_{n,i_0}^{\hat{w}})\lfloor\theta \rfloor).
\end{equation}
By the induction hypothesis,
\[
\frac{1}{n^{k}}\#\Pi^*_\theta(\hat{w})=\sum_{j=0}^{k-2}A_{j,n}^{\hat{w}} \lfloor\theta \rfloor^{j} (\lfloor\theta \rfloor+1)^{k-2-j}.
\]
Using this in (\ref{eq:recurrence_gen_hank_cat}) we obtain
\begin{align*}
\frac{1}{n^{1+k}}\#\Pi^*_\theta(w) & =\left(\sum_{j=0}^{k-2}A_{j,n}^{\hat{w}} \lfloor\theta \rfloor^{j} (\lfloor\theta \rfloor+1)^{k-2-j}\right)(\alpha_{n,i_0}^{\hat{w}}(\lfloor\theta \rfloor+1) + (1-\alpha_{n,i_0}^{\hat{w}})\lfloor\theta \rfloor)\\
& = \sum_{j=0}^{k-1}A_{j,n}^{w} \lfloor\theta \rfloor^{j} (\lfloor\theta \rfloor+1)^{k-1-j},
\end{align*}
where
\begin{equation}\label{eq:finite_rec}
A_{j,n}^{w}:=\begin{cases}
(1-\alpha_{n,i_0}^{\hat{w}})A_{j-1,n}^{\hat{w}} + \alpha_{n,i_0}^{\hat{w}}A_{j,n}^{\hat{w}}, & \text{for $j\in \{1,\ldots,k-2\}$},\\
\alpha_{n,i_0}^{\hat{w}}A_{j,n}^{\hat{w}}, & \text{for $j=0$, and}\\
(1-\alpha_{n,i_0}^{\hat{w}})A_{j-1,n}^{\hat{w}}, & \text{for $j=k-1$.}
\end{cases}
\end{equation}
Note that
\begin{align*}
\sum_{j=0}^{k-1}A_{j,n}^{w} &= \alpha_{n,i_0}^{\hat{w}}A_{j,n}^{\hat{w}} + \sum_{j=1}^{k-2}\left((1-\alpha_{n,i_0}^{\hat{w}})A_{j-1,n}^{\hat{w}} + \alpha_{n,i_0}^{\hat{w}}A_{j,n}^{\hat{w}}\right) + (1-\alpha_{n,i_0}^{\hat{w}})A_{j-1,n}^{\hat{w}}\\
&=  (1-\alpha_{n,i_0}^{\hat{w}})\sum_{j=0}^{k-2}A_{j,n}^{\hat{w}}+\alpha_{n,i_0}^{\hat{w}}\sum_{j=0}^{k-2}A_{j,n}^{\hat{w}}\\
&= \sum_{j=0}^{k-2}A_{j,n}^{\hat{w}}\\
&= 1, \, \text{(by induction hypothesis).}
\end{align*}
This completes the induction and the claim follows.

Now suppose that we could prove that $\alpha_{n,i_0}^{w}$ tends to a limit, say $\alpha_{i_0}^{w}$, for each Catalan word $w$, and for each $i_0\in \{1,\ldots,2k\}$, then using an induction argument with the recurrence (\ref{eq:finite_rec}) it would follow that $A_{j,n}^{w}$ tends to a limit, say $A_j^{w}$, for each Catalan word $w$ and for each $j\in\{0,\ldots,k-1\}$ (note again that the base case $k=1$ is trivial to verify), and we would have the following system of recurrences for $A_j^{w}$:
\begin{equation}
A_j^{w}=\begin{cases}
(1-\alpha_{i_0}^{\hat{w}})A_{j-1}^{\hat{w}} + \alpha_{i_0}^{\hat{w}}A_j^{\hat{w}}, & \text{for $j\in \{1,\ldots,k-2\}$},\\
\alpha_{i_0}^{\hat{w}}A_j^{\hat{w}}, & \text{for $j=0$, and}\\
(1-\alpha_{i_0}^{\hat{w}})A_{j-1}^{\hat{w}}, & \text{for $j=k-1$.}
\end{cases}
\end{equation}
In that case $p_\theta(w)$, for each Catalan word $w$, is given by
\begin{equation}
p_\theta(w)=\sum_{j=0}^{k-1}A_j^{w} \lfloor\theta \rfloor^j (\lfloor\theta \rfloor +1)^{k-1-j}.
\end{equation}
The constants $A_j^{w}$ clearly satisfy
\[
\sum_{j=0}^{k-1}A_j^{w}=1,
\]
because their finite $n$ counterparts do so.

It thus remains to prove that $\alpha_{n,i_0}^{w}$ tends to a limit for each Catalan word $w$, and for each $i_0\in \{1,\ldots,2k\}$. Let $\pi \in \Pi^*_\theta(w)$. Suppose that there are $m_{i_0}^{w}$ many vertices in $\pi$ that are dependent on $\pi(i_0-1)$ (i.e. choice of $\pi(i_0-1)$ constraints the choice of these vertices). A moment's reflection reveals that for a fixed $\pi(i_0-1)$ these vertices together can be chosen in either $(\lfloor\theta \rfloor+1)^{m_{i_0}^{w}}$ or $\lfloor\theta \rfloor^{m_{i_0}^{w}}$ ways, according as whether $r_{i_0}^\theta\in \{1,\ldots,\xi_n\}$ or $r_{i_0}^\theta\in \{\xi_n+1,\ldots,\lfloor n/\theta \rfloor-1\}$ (if $\pi(0)$ and $\pi(2k)$ both depend on $\pi(i_0-1)$, we count only one of them, in order to honor the automatic constraint $\pi(0)=\pi(2k)$). The other vertices can be chosen independently in, say, total $N$ ways.
Then,
\[
d_{n,i_0}^{w}=N(\lfloor\theta \rfloor+1)^{m_{i_0}^{w}}\xi_n (\lfloor\theta \rfloor+1),
\]
and
\[
\#\Pi^*_\theta(w)=N(\lfloor\theta \rfloor+1)^{m_{i_0}^{w}}\xi_n (\lfloor\theta \rfloor+1)+N \lfloor\theta \rfloor^{m_{i_0}^{w}} (\lfloor n/\theta \rfloor-\xi_n )\lfloor\theta \rfloor. 
\]
So,
\[
\alpha_{n,i_0}^{w}=\frac{(\lfloor\theta \rfloor+1)^{m_{i_0}^{w}+1}\xi_n}{(\lfloor\theta \rfloor+1)^{m_{i_0}^{w}+1}\xi_n +\lfloor\theta \rfloor^{m_{i_0}^{w}+1} (\lfloor n/\theta \rfloor-\xi_n )}.
\]
Now note that
\[
\frac{\xi_n}{n}=\frac{n-\lfloor\theta \rfloor\lfloor n/\theta \rfloor}{n}=1-\lfloor\theta \rfloor\frac{\lfloor n/\theta \rfloor}{n}\rightarrow 1-\frac{\lfloor\theta \rfloor}{\theta},
\]
and
\[
\frac{\lfloor n/\theta \rfloor - \xi_n}{n}=\frac{\lfloor n/\theta \rfloor}{n}-\frac{\xi_n}{n}\rightarrow \frac{1}{\theta}-(1-\frac{\lfloor\theta \rfloor}{\theta})=\frac{\lfloor\theta \rfloor+1}{\theta}-1.
\]
Therefore
\begin{equation}\label{eq:alpha}
\alpha_{n,i_0}^{w}\rightarrow \alpha_{i_0}^{w}:=\frac{(\lfloor\theta \rfloor+1)^{m_{i_0}^{w}+1}(1-\frac{\lfloor\theta \rfloor}{\theta})}{(\lfloor\theta \rfloor+1)^{m_{i_0}^{w}+1}(1-\frac{\lfloor\theta \rfloor}{\theta}) +\lfloor\theta \rfloor^{m_{i_0}^{w}+1} (\frac{\lfloor\theta \rfloor+1}{\theta}-1)}.
\end{equation}
\textbf{Case II: $\theta$ is an integer.} In this case $\lfloor\theta \rfloor=\theta$. Similar to the proof of Theorem \ref{thm:repr}, we consider several subsequences: First consider the subsequence $\{n_\ell^{(1)}\}$, where $n$ is a multiple of $\theta$, i.e. $n=\theta n_1$, for some $n_1$. Then $\{n/\theta\}=\{n_1\}=0$ and therefore
\[
C_{\theta,n}\leqslant \theta + \frac{1}{\lfloor n/\theta \rfloor}(\theta\{n/\theta\}-1) = \theta-\frac{1}{n_1},
\]
which implies that for all large enough $n$ (for the subsequence $\{n_\ell^{(1)}\}$) we have
\[
C_{\theta,n}=\theta-1.
\]
On the other hand, for the subsequence
$$\{n_\ell^{(2)}\}=\{\theta n_1+n_2\}_{\begin{subarray}{1} 0 < n_2 < \theta\\n_1 \in \mathbb{N}\end{subarray}},$$
we note that $\theta\{n/\theta\}=n_2$. As a consequence,
\[
C_{\theta,n}\leqslant \theta + \frac{1}{\lfloor n/\theta \rfloor}(\theta\{n/\theta\}-1) = \theta +\frac{n_2-1}{n_1}.
\]
So, in this case, for all large enough $n$, we have
\[
C_{\theta,n}=\theta.
\]
Repeating the arguments given in Case I, for the subsequence $\{n_\ell^{(1)}\}$, for $w \in \mathcal{C}_{2k}$, we obtain the limit to be
$$p_\theta(w)= \sum_{j=0}^{k-1}A_j^{w,1} (\theta-1)^j \theta^{k-1-j}.$$
On the other hand, for the subsequence $\{n_\ell^{(2)}\}$, the limit is given by
$$p_\theta(w)= \sum_{j=0}^{k-1}A_j^{w,2} \theta^j (\theta+1)^{k-1-j}.$$
Since the limit is same for all subsequence (as seen in the proof of Theorem \ref{thm:repr}), we must have
\begin{equation}\label{eq:theta_integral_cat}
p_\theta(w)=\sum_{j=0}^{k-1}A_j^{w,1} (\theta-1)^j \theta^{k-1-j}=\sum_{j=0}^{k-1}A_j^{w,2} \theta^j (\theta+1)^{k-1-j}.
\end{equation}
Defining $A_j^{w}=A_j^{w,2}$, we obtain the desired representation. This completes the proof of the theorem. 
\end{proof}

\noindent
Using the representation of $p_\theta(w)$ for $w \in \mathcal{C}_{2k}$, we deduce the following results:
\begin{cor}\label{cor1:cat}
If $\theta\leqslant 1$, then for each $w \in \mathcal{C}_{2k}$, $p_\theta(w)=1$.
\end{cor}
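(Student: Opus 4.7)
The plan is to invoke Theorem~\ref{thm:gen_hankel_cat} and observe that for $\theta \leq 1$ only the $j=0$ term in the representation
\[
p_\theta(w) = \sum_{j=0}^{k-1} A_j^w \lfloor \theta \rfloor^j (\lfloor \theta \rfloor + 1)^{k-1-j}
\]
survives, after which I reduce the claim to showing that the single surviving coefficient equals $1$.

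For $\theta \in (0,1)$, one has $\lfloor \theta \rfloor = 0$, and (with the usual convention $0^0=1$) the displayed sum collapses to $p_\theta(w) = A_0^w$. I would prove $A_0^w = 1$ by induction on $k$. The base case $w = aa$ follows from $\tfrac{1}{n^2}\#\Pi^*_\theta(aa) = 1$. For the inductive step, take $\hat w \in \mathcal{C}_{2k-2}$ obtained from $w$ by deleting the first double letter. Evaluating (\ref{eq:alpha}) at $\lfloor \theta \rfloor = 0$ gives $\alpha_{i_0}^{\hat w} = 1$ (the $0^{m+1}$ term in the denominator vanishes since $m_{i_0}^{\hat w} + 1 \geq 1$), so the limiting form of the recursion (\ref{eq:finite_rec}) reduces to $A_0^w = \alpha_{i_0}^{\hat w}\, A_0^{\hat w} = A_0^{\hat w}$, which equals $1$ by the inductive hypothesis.

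The boundary case $\theta = 1$ is the Reverse Circulant, for which $p_1(w) = 1$ on Catalan words is already known. Alternatively, one can repeat the induction above along the subsequence $\{n_\ell^{(1)}\}$ appearing in Case II of the proof of Theorem~\ref{thm:gen_hankel_cat}: along this subsequence $C_{\theta,n} = \theta - 1 = 0$ for all large $n$, so the integer $r$ in (\ref{eq:definition_of_r}) is forced to vanish, and the same argument goes through verbatim with $\lfloor \theta \rfloor$ and $\lfloor \theta \rfloor + 1$ replaced by $\theta - 1 = 0$ and $\theta = 1$.

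The proof is essentially bookkeeping once Theorem~\ref{thm:gen_hankel_cat} is in place, so I do not expect a real obstacle. The only minor subtleties are correctly evaluating $\alpha_{i_0}^{\hat w}$ at the boundary $\lfloor \theta \rfloor = 0$ and applying the convention $0^0 = 1$ in the surviving $j = 0$ term of the representation.
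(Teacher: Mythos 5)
Your proposal is correct and follows essentially the same route as the paper: for $\theta<1$ it reduces $p_\theta(w)$ to $A_0^w$ via Theorem~\ref{thm:gen_hankel_cat}, shows $\alpha_{i_0}^{\hat w}=1$ from (\ref{eq:alpha}) with $\lfloor\theta\rfloor=0$ so that the recursion gives $A_0^w=A_0^{aa}=1$, and treats $\theta=1$ separately by noting that the relevant constraint constant vanishes (the paper gets $C_{1,n}=0$ for all $n$ directly from $C_{1,n}\leqslant 1-\tfrac{1}{n}$, which for $\theta=1$ coincides with your subsequence observation since every $n$ is a multiple of $1$).
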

\begin{proof}
For $\theta<1$, $\lfloor\theta \rfloor=0$. Therefore, from the above theorem, for any Catalan word $w$ we have 
$p_\theta(w)=A_0^w.$ Using the recursion for $A_j^w$, we have $A_0^w=\alpha_{i_0}^{\hat{w}}A_0^{\hat{w}}$. Using $C_{\theta,n}=0$ for all large $n$, and (\ref{eq:alpha}), it is easy to check that $\alpha_{i_0}^{\hat{w}}=1$ for each $w$ and $i_0$. Therefore $A_0^{w}=A_0^{\hat{w}}$. Using this repeatedly we have $A_0^{w}=A_0^{aa}=1$.

If $\theta=1$ (it is already known that in the Reverse Circulant case the Catalan words contribute $1$; we still give a proof), from the inequality $C_{\theta,n}\leqslant \frac{n-1}{\lfloor n/\theta \rfloor}$ we have $C_{1,n}\leqslant 1 - \frac{1}{n}$, implying that $C_{1,n}=0$ for all $n$. So, the proof of Theorem \ref{thm:gen_hankel_cat} can be adapted to this case with $\lfloor\theta \rfloor$ replaced by $0$.
\end{proof}
\begin{cor}\label{cor2:cat}
If $\theta\in (1,\infty)$, then for each $w \in \mathcal{C}_{2k}$, we have $p_\theta(w)>1$. So, this is one example where the Catalan words contribute more than $1$ to the limiting moments.
\end{cor}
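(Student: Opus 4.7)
The plan is to leverage the explicit representation
\[
p_\theta(w)=\sum_{j=0}^{k-1}A_j^{w}\,\lfloor\theta\rfloor^{j}(\lfloor\theta\rfloor+1)^{k-1-j}
\]
supplied by Theorem \ref{thm:gen_hankel_cat}, where $A_j^{w}\geq 0$ and $\sum_j A_j^{w}=1$, so that $p_\theta(w)$ is a convex combination of the numbers $c_j:=\lfloor\theta\rfloor^{j}(\lfloor\theta\rfloor+1)^{k-1-j}$. For $\theta>1$ we have $\lfloor\theta\rfloor\geq 1$, hence $c_j\geq 1$ for every $j$, and so $p_\theta(w)\geq 1$ automatically. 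The whole task is to upgrade this to a strict inequality. I tacitly take $k\geq 2$, since $\mathcal{C}_{2}=\{aa\}$ gives $p_\theta(aa)=1$ for every $\theta$ (the case the corollary cannot strictly cover).

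The case $\lfloor\theta\rfloor\geq 2$ is immediate: then $c_j\geq \lfloor\theta\rfloor^{k-1}\geq 2^{k-1}\geq 2$ for $k\geq 2$, whence $p_\theta(w)\geq 2>1$. This disposes of every $\theta\geq 2$. The substantive case is $\theta\in(1,2)$, where $\lfloor\theta\rfloor=1$ and
\[
c_j=2^{k-1-j},\qquad 0\leq j\leq k-1,
\]
so $c_j=1$ only at $j=k-1$ while $c_j>1$ for $j<k-1$. Consequently $p_\theta(w)>1$ if and only if $\sum_{j<k-1}A_j^{w}>0$, and it is enough to show the stronger (and cleaner) statement
\[
A_0^{w}>0\qquad\text{for every }w\in\mathcal{C}_{2k}.
\]

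I would prove $A_0^{w}>0$ by induction on $k$, using the recursion $A_0^{w}=\alpha_{i_0}^{\hat w}\,A_0^{\hat w}$ extracted from the proof of Theorem \ref{thm:gen_hankel_cat} (here $\hat w\in\mathcal{C}_{2k-2}$ is obtained from $w$ by deleting a double letter). The base case $k=1$ is trivial, $A_0^{aa}=1$. For the inductive step the whole issue is the positivity of $\alpha_{i_0}^{\hat w}$. Formula \eqref{eq:alpha} expresses this as a ratio whose numerator is $(\lfloor\theta\rfloor+1)^{m_{i_0}^{\hat w}+1}(1-\lfloor\theta\rfloor/\theta)$, which is strictly positive exactly because $\theta$ is non-integer in the regime $\theta\in(1,2)$. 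This closes the induction, and combining with the paragraph above finishes the proof.

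I do not foresee a real obstacle: the argument is a convexity bookkeeping once one observes which of the numbers $c_j$ equal $1$. The one subtlety worth flagging is the borderline $\theta\in(1,2)$, where the minimum coefficient in the convex combination is exactly $1$; this forces one to extract positivity of a single weight $A_0^{w}$ rather than relying on crude bounds, but the recursion makes that routine.
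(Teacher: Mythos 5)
Your proof is correct and takes essentially the same route as the paper: for $\theta\geqslant 2$ the claim is immediate from the convex-combination representation of Theorem \ref{thm:gen_hankel_cat}, and for $\theta\in(1,2)$ both arguments rest on the strict positivity of $\alpha_{i_0}$ in (\ref{eq:alpha}) together with the recursion for the weights, the only (negligible) difference being that the paper concludes $A_{k-1}^{w}<1$ in one step while you prove the slightly stronger $A_0^{w}>0$ by induction on deleting double letters. Your explicit restriction to $k\geqslant 2$ (since $p_\theta(aa)=1$) is a fair caveat that the paper leaves implicit.
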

\begin{proof}
For $ \theta  \geqslant 2$, the proof follows straightaway from Theorem \ref{thm:gen_hankel_cat}. When  $\theta \in (1,2)$,  by (\ref{eq:alpha}), for every $i_0$, we have $\alpha_{i_0}^w >0$. Therefore we must have $A_{k-1}^w <1$, which in turn implies that $A_j^w>0$ for some $j \in \{0,1,\ldots,k-2\}$, and thus we have the desired result.
\end{proof}
\begin{cor}\label{cor:cat3}
If $\theta\in(1,\infty)$, then for each $w \in \mathcal{C}_{2k}$ we have
\[
p_\theta(w)\sim \theta^{k-1}, \text{ as $\theta\rightarrow \infty$}. 
\]
\end{cor}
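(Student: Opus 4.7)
The plan is to apply Theorem \ref{thm:gen_hankel_cat} directly and sandwich the resulting polynomial expression between two elementary bounds. Recall that the theorem gives us the representation
\[
p_\theta(w) = \sum_{j=0}^{k-1} A_j^w \lfloor \theta \rfloor^j (\lfloor \theta \rfloor + 1)^{k-1-j},
\]
together with the properties that $A_j^w \ge 0$ and $\sum_{j=0}^{k-1} A_j^w = 1$. The first observation I would make is simply that, since $\lfloor \theta \rfloor \le \theta < \lfloor \theta \rfloor + 1$, each monomial appearing in the sum is pinched between $\lfloor \theta \rfloor^{k-1}$ (replace every factor $\lfloor \theta \rfloor + 1$ by $\lfloor \theta \rfloor$) and $(\lfloor \theta \rfloor + 1)^{k-1}$ (replace every factor $\lfloor \theta \rfloor$ by $\lfloor \theta \rfloor + 1$).

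Combined with the non-negativity of the $A_j^w$ and the fact that they sum to $1$, this yields the two-sided estimate
\[
\lfloor \theta \rfloor^{k-1} \;\le\; p_\theta(w) \;\le\; (\lfloor \theta \rfloor + 1)^{k-1},
\]
which moreover holds uniformly over all $w \in \mathcal{C}_{2k}$. Dividing through by $\theta^{k-1}$ and using $\lfloor \theta \rfloor / \theta \to 1$ and $(\lfloor \theta \rfloor + 1)/\theta \to 1$ as $\theta \to \infty$, both bounds converge to $1$. This gives $p_\theta(w)/\theta^{k-1} \to 1$, which is exactly the claim $p_\theta(w) \sim \theta^{k-1}$.

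Because the work is entirely combinatorial after Theorem \ref{thm:gen_hankel_cat}, there is no substantive obstacle here. The one subtle point I would want to highlight is that the coefficients $A_j^w$ themselves depend on $\theta$, so one cannot blindly pass to a limit term by term inside the sum; the sandwich circumvents this by only using the simplex constraints $A_j^w \ge 0$ and $\sum_j A_j^w = 1$, which are preserved under variation of $\theta$.
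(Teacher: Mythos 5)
Your argument is correct and is essentially the paper's own proof: the paper likewise deduces the sandwich $\lfloor\theta\rfloor^{k-1}\leqslant p_\theta(w)\leqslant(\lfloor\theta\rfloor+1)^{k-1}$ from Theorem \ref{thm:gen_hankel_cat} and concludes immediately. Your added remark about the $\theta$-dependence of the $A_j^w$ and why the simplex constraints suffice is a fair clarification, but it is not a different method.
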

\begin{proof}
It suffices to note that from Theorem \ref{thm:gen_hankel_cat} we have the estimate
\[
\lfloor\theta \rfloor^{k-1}\leqslant p_\theta(w) \leqslant (\lfloor\theta \rfloor + 1)^{k-1}.
\]
\end{proof}
\begin{cor}\label{cor4:cat}
If $\theta$ is a positive integer, we have $p_\theta(w)=\theta^{k-1}$, for all $w \in \mathcal{C}_{2k}$.
\end{cor}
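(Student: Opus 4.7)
My plan is to prove the corollary by induction on $k$, recycling the recursive mechanism developed in the proof of Theorem \ref{thm:gen_hankel_cat}. The base case $k=1$ is immediate: the only Catalan word of length $2$ is $aa$ and $p_\theta(aa)=1=\theta^0$.

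For the inductive step, fix $w\in\mathcal{C}_{2k}$, pick a double letter at some position $i_0$ of $w$, and let $\hat w\in\mathcal{C}_{2(k-1)}$ be the Catalan word obtained by deleting it. It suffices to show $p_\theta(w)=\theta\cdot p_\theta(\hat w)$, for then the inductive hypothesis $p_\theta(\hat w)=\theta^{k-2}$ gives $p_\theta(w)=\theta^{k-1}$. To isolate the factor $\theta$ cleanly I would specialize to the subsequence $\{n_\ell^{(1)}\}=\{\theta n_1\}_{n_1\in\N}$ used in Case II of the proof of Theorem \ref{thm:gen_hankel_cat}: for such $n$ one has $\lfloor n/\theta\rfloor=n_1$ and $\xi_n:=n-\theta\lfloor n/\theta\rfloor=0$, so the residual block $\{\theta\lfloor n/\theta\rfloor+1,\ldots,n\}$ responsible for the $(\lfloor\theta\rfloor+1)$-option in the general recursion is empty.

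The main routine calculation is then the following: using the partition $\{1,\ldots,n\}=\bigcup_{q=0}^{\theta-1}\{qn_1+1,\ldots,(q+1)n_1\}$ together with the double-letter constraint $\pi(i_0+1)=\pi(i_0-1)-rn_1$ for $r\in\{0,\pm 1,\ldots,\pm(\theta-1)\}$, one checks that each value of $\pi(i_0-1)$ admits exactly $\theta$ valid choices of $r$. Since $\pi(i_0)$ is a generating vertex with $n$ free choices, the insertion of the deleted double letter into any $\hat\pi\in\Pi_\theta^*(\hat w)$ produces exactly $n\theta$ lifts $\pi\in\Pi_\theta^*(w)$, whence $\#\Pi_\theta^*(w)=n\theta\cdot\#\Pi_\theta^*(\hat w)$ along $\{n_\ell^{(1)}\}$. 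Dividing by $n^{k+1}$ and passing to the limit gives $p_\theta(w)=\theta\cdot p_\theta(\hat w)$, and the subsequential limit agrees with the full limit because Theorem \ref{thm:repr} has already established the existence of $p_\theta(w)$.

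Equivalently, in the language of the recursion (\ref{eq:finite_rec}), the condition $\xi_n=0$ forces $\alpha_{n,i_0}^{\hat w}=0$ along $\{n_\ell^{(1)}\}$, so (\ref{eq:finite_rec}) yields $A_{0,n}^{w,1}=A_{0,n}^{\hat w,1}$ and $A_{j,n}^{w,1}=0$ for $j\geq 1$; inductively this propagates from $A_0^{aa,1}=1$ to $A_0^{w,1}=1$ and $A_j^{w,1}=0$ otherwise for every $w\in\mathcal{C}_{2k}$, and the representation (\ref{eq:theta_integral_cat}) then gives $p_\theta(w)=(\theta-1)^{0}\theta^{k-1}=\theta^{k-1}$. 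The only step requiring any real care is the "exactly $\theta$" count above, which is a direct block-counting exercise; I do not foresee a serious obstacle.
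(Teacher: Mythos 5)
Your main argument is correct, but it is not the route the paper takes for this corollary, so a comparison is in order. The paper stays inside the coefficient bookkeeping of Theorem \ref{thm:gen_hankel_cat}: along $\{n_\ell^{(1)}\}=\{\theta n_1\}$ one has $C_{\theta,n}=\theta-1$, substituting $\theta-1$ for $\lfloor\theta\rfloor$ in (\ref{eq:alpha}) gives $\alpha_{i_0}^{w,1}=1$, the recursion then forces $A_0^{w,1}=1$ and $A_j^{w,1}=0$ for $j\neq 0$, and the representation $p_\theta(w)=\sum_j A_j^{w,1}(\theta-1)^j\theta^{k-1-j}$ from (\ref{eq:theta_integral_cat}) yields $\theta^{k-1}$. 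You instead bypass the $A_j$'s entirely: along the same subsequence you count exactly $\theta$ admissible values of $\pi(i_0+1)$ for every $\pi(i_0-1)$ (since each residue class mod $n_1$ has exactly $\theta$ representatives in $\{1,\ldots,\theta n_1\}$), obtain the exact identity $\#\Pi^*_\theta(w)=n\theta\,\#\Pi^*_\theta(\hat w)$, and induct, invoking Theorem \ref{thm:repr} to upgrade the subsequential limit to the full one. This is a clean, self-contained argument, and it is in fact closer in spirit to the paper's proof of Theorem \ref{thm:theta_integer} (direct counting along $\{\theta n_1\}$) than to its proof of this corollary; what the paper's version buys is that it reuses machinery already set up and exhibits the answer as a degenerate case of the general mixture formula, while yours avoids any reliance on the $(\theta-1,\theta)$ versus $(\theta,\theta+1)$ bookkeeping.

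Two caveats. First, "pick a double letter at some position $i_0$" should be "the first double letter" (as in Theorem \ref{thm:gen_hankel_cat}): if the chosen double letter occupied positions $(2k-1,2k)$, deleting it need not produce a circuit of $\hat w$ (one only gets $\pi(2k-2)\equiv\pi(0) \pmod{n_1}$, not equality), so the lifting count would break; for $k\ge 2$ the first double letter never sits at the end, so that choice is safe. Second, your "equivalently" paragraph mixes two incompatible bookkeepings: if you set $\xi_n=0$ and keep $\lfloor\theta\rfloor=\theta$, then indeed $\alpha_{n,i_0}^{\hat w}=0$, but (\ref{eq:finite_rec}) then gives $A_{0,n}^{w}=0$ and $A_{j,n}^{w}=A_{j-1,n}^{\hat w}$, pushing all mass to $j=k-1$ and giving $\theta^{k-1}(\theta+1)^{0}$; whereas the representation $(\theta-1)^j\theta^{k-1-j}$ you finally plug into corresponds to the $C_{\theta,n}=\theta-1$ bookkeeping, in which $\alpha=1$ and the mass sits at $j=0$ (the paper's computation). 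As written, "$\alpha=0$ yet $A_0^{w,1}=1$" is internally inconsistent; the two slips cancel and the final value $\theta^{k-1}$ is right, but you should commit to one of the two consistent versions.
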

\begin{proof}
We shall compute the constants $A_j^{w,1}$, using the recursions derived in the proof of Theorem \ref{thm:gen_hankel_cat}.
For all large $n$ in the subsequence $\{n_\ell^{(1)}\}$, we have $C_{\theta,n}=\theta-1$. Using this in place of $\lfloor \theta \rfloor$ in (\ref{eq:alpha}) we have the limiting $\alpha_{i_0}^{w,1}=1$ for all $i_0$ and $w \in \mathcal{C}_{2k}$. Therefore, using the recursion for the $A_j^{w,1}$, we obtain
\begin{equation}
A_j^{w,1}=\begin{cases}
A_j^{\hat{w},1}, & \text{for $j\in \{1,\ldots,k-2\}$},\\
A_j^{\hat{w},1}, & \text{for $j=0$, and}\\
0, & \text{for $j=k-1$,}
\end{cases}
\end{equation}
where $\hat{w}$ is the word obtained from $w$ by deleting the first double letter. Denoting $\hat{\hat{w}}$ to be the word obtained from $\hat{w}$ by removing the first double letter, and applying the same argument on $\hat{w}$ yields
\begin{equation}
A_j^{\hat{w},1}=\begin{cases}
A_j^{\hat{\hat{w}},1}, & \text{for $j\in \{1,\ldots,k-3\}$},\\
A_j^{\hat{\hat{w}},1}, & \text{for $j=0$, and}\\
0, & \text{for $j=k-2$.}
\end{cases}
\end{equation}
Continuing this procedure we finally obtain that $A_j^{w,1}=0$ for all $j\neq 0$ and $A_0^{w}=A_0^{aa}=1$. Substituting these values in (\ref{eq:theta_integral_cat}) we obtain the desired result.
\end{proof}
\begin{cor}\label{cor:cat4}
For any $\theta\in(1,\infty)$, we have the following weak bounds on the moments
\[
\frac{1}{k+1}\binom{2k}{k}\lfloor\theta \rfloor^{k-1} \leqslant \beta_{2k}^{(\theta)} \leqslant k! (\lfloor \theta \rfloor+1)^{k-1}.
\]
(These bounds are also true in the $\theta\leqslant 1$ regime, albeit the left inequality becomes trivial.) Thus, as $\theta \rightarrow \infty$, we have the weak asymptotic statement
\[
\beta_{2k}^{(\theta)}\asymp_k\theta^{k-1}.
\]
\end{cor}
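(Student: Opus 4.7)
The strategy is to sandwich $\beta_{2k}^{(\theta)} = \sum_{w \in \mathcal{W}_{2k}^{(p)}} p_\theta(w)$ by restricting the sum to combinatorial subclasses whose contributions can be controlled using Theorem \ref{thm:gen_hankel_cat} together with direct vertex counting. For the lower bound I would keep only the Catalan words; for the upper bound I would use the already observed fact that only symmetric pair-matched words contribute.

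For the lower bound, fix any $w \in \mathcal{C}_{2k}$. Theorem \ref{thm:gen_hankel_cat} gives
\[
p_\theta(w) = \sum_{j=0}^{k-1} A_j^w \lfloor\theta\rfloor^j (\lfloor\theta\rfloor+1)^{k-1-j},
\]
with $A_j^w \geq 0$ and $\sum_j A_j^w = 1$. Since $(\lfloor\theta\rfloor+1)^{k-1-j} \geq \lfloor\theta\rfloor^{k-1-j}$, each summand is at least $\lfloor\theta\rfloor^{k-1}$, so $p_\theta(w) \geq \lfloor\theta\rfloor^{k-1}$. As $|\mathcal{C}_{2k}| = \frac{1}{k+1}\binom{2k}{k}$ and $p_\theta(w) \geq 0$ for every pair-matched $w$, summing yields the stated lower bound. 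When $\theta < 1$ one has $\lfloor\theta\rfloor = 0$, which is why the inequality becomes the trivial statement $\beta_{2k}^{(\theta)} \geq 0$ in that regime.

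For the upper bound, only symmetric pair-matched words contribute, and the number of such words of length $2k$ is $k!$ (they correspond to bijections between the $k$ odd and the $k$ even positions). It therefore suffices to establish $p_\theta(w) \leq (\lfloor\theta\rfloor+1)^{k-1}$ for every symmetric $w$. I would adapt the counting used in the proof of the Sub-Reverse Circulant proposition: after freely choosing the $k+1$ generating vertices in $\{1,\ldots,n\}$, each non-generating vertex $\pi(j)$ is forced by its match equation $\pi(j) \equiv (\text{linear combination of earlier } \pi\text{'s}) \pmod{\lfloor n/\theta \rfloor}$, which for large $n$ admits at most $\lfloor\theta\rfloor + 1$ values inside $\{1,\ldots,n\}$. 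Naively this gives $(\lfloor\theta\rfloor + 1)^k$ choices for the winding numbers $r_s$, but the circuit-closing requirement $\pi(0) = \pi(2k)$ translates, after iterating the match relations around the circuit, into a single linear equation $\sum_s c_s r_s = 0$ with integer coefficients $c_s$, which eliminates one degree of freedom. Hence $\# \Pi_\theta^*(w) \leq (1+o(1)) n^{k+1} (\lfloor\theta\rfloor+1)^{k-1}$, so $p_\theta(w) \leq (\lfloor\theta\rfloor+1)^{k-1}$, and summing over the $k!$ symmetric words gives the stated upper bound.

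The asymptotic $\beta_{2k}^{(\theta)} \asymp_k \theta^{k-1}$ then follows at once, since both $\lfloor\theta\rfloor^{k-1}$ and $(\lfloor\theta\rfloor+1)^{k-1}$ are $\theta^{k-1}(1+o(1))$ as $\theta \to \infty$. The step requiring real care is the upper bound for general symmetric words: one must verify that the closing constraint actually cuts the winding-number configuration space by exactly one dimension (rather than zero or more than one), which I would expect to follow from a careful accounting of the $\pm 1$ coefficients that arise as one telescopes around the cycle $\pi(0) \to \pi(1) \to \cdots \to \pi(2k)$. This bookkeeping, uniform in the generating vertex configuration, is the main technical hurdle in the argument.
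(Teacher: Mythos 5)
Your overall route is the same as the paper's: the lower bound restricts the sum to the $\frac{1}{k+1}\binom{2k}{k}$ Catalan words and applies Theorem \ref{thm:gen_hankel_cat} (this part of your argument is complete and correct, including the remark about $\lfloor\theta\rfloor=0$ when $\theta<1$), and the upper bound uses the $k!$ symmetric pair-matched words together with a per-word bound $p_\theta(w)\leqslant(\lfloor\theta\rfloor+1)^{k-1}$. The one genuine gap is precisely the step you flag yourself: you reduce the upper bound to the claim that the circuit-closing condition $\pi(0)=\pi(2k)$ cuts the naive $(\lfloor\theta\rfloor+1)^{k}$ winding-number count by exactly one factor, via a telescoped relation $\sum_s c_s r_s=0$, and you only say you ``expect'' the coefficient bookkeeping to work out. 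As written this is not yet a proof: you would need to exhibit a match whose winding number enters with nonzero coefficient, uniformly over the generating-vertex configurations, and in general the telescoped relation equates $\sum_s c_s r_s$ to an integer determined by the generating vertices rather than to $0$, so the reduction by one degree of freedom is not automatic from the equation alone.

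The paper's proof avoids this issue entirely with a shorter count that you should adopt: in any pair-matched word the index $2k$ is a second occurrence, so $\pi(2k)$ is itself a non-generating vertex, and the circuit condition forces $\pi(2k)=\pi(0)$, i.e. at most one admissible value for it once $\pi(0)$ is fixed. Hence, after choosing the $k+1$ generating vertices freely (at most $n^{k+1}$ ways), only the remaining $k-1$ non-generating vertices are left, and each admits at most $\lfloor\theta\rfloor+1$ values, since its match relation determines it modulo $\lfloor n/\theta\rfloor$ inside $\{1,\ldots,n\}$. This gives $\#\Pi^*_\theta(w)\leqslant n^{1+k}(\lfloor\theta\rfloor+1)^{k-1}$, hence $p_\theta(w)\leqslant(\lfloor\theta\rfloor+1)^{k-1}$, with no winding-number dimension count needed; summing over the $k!$ symmetric words (all other words contribute $0$) completes the upper bound, and the asymptotic $\beta_{2k}^{(\theta)}\asymp_k\theta^{k-1}$ follows as you say.
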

\begin{proof}
The left inequality is obvious by Theorem \ref{thm:gen_hankel_cat}. The right side follows since there are $k!$ symmetric words and for each such word $p_\theta(w)$ is at most $(\lfloor\theta\rfloor+1)^k$. To see this note that if one chooses the generating vertices freely, then $\pi(2k)$ is fixed and each of the remaining $k-1$ vertices can be chosen in at most $\lfloor \theta \rfloor+1$ ways, so that $\#\Pi^*_\theta(w)\leqslant n^{1+k}(\lfloor \theta \rfloor+1)^{k-1}$. 
\end{proof}
{\begin{remark}
Theorem \ref{thm:gen_hankel_cat} and its corollaries continues to hold for the generalized link function $L_n$ of Remark \ref{genlink}, when $n/a_n \rightarrow \theta \in (0,\infty)$. 
\end{remark}
}
As a consequence of Theorem \ref{thm:gen_hankel_cat}, in Corollary \ref{cor4:cat} we already obtained the value of $p_\theta(w)$ for $w \in \mathcal{C}_{2k}$. Using a different argument we now find the value of $p_\theta(w)$ for all words in the integer $\theta$ case, which will establish then Theorem \ref{thm:theta_integer}.

\begin{proof}[Proof of Theorem \ref{thm:theta_integer}]
We already noted that $p_\theta(w)=0$ for any non-symmetric word. 
 Fix a symmetric pair-matched word $w$. We first show that  along the subsequence $\{n_\ell^{(1)}\}= \{\theta n_1\}_{n_1 \in \mathbb{N}}$, we have 
\begin{equation}
\#\Pi_\theta^*(w)= n^{1+k} \times \theta^{k-1},
\end{equation} thereby yielding $p_\theta(w)= \theta^{k-1}$, for such  a subsequence. 
To do this, we use the following combinatorial argument. 

We count the number of circuits $\pi \in \Pi^{*}_\theta(w)$, by identifying the number of possible choices for different vertices from left to the right. Obviously $\pi(0)$, and $\pi(1)$ can be chosen in $n$ valid ways. Similarly every generating vertex can be chosen in $n$ ways and all are valid choices. We further note that any non-generating vertex (excluding $\pi(2k)$), upon fixing the generating vertices to its left, can be chosen in exactly $\theta$ ways. Indeed, for a pair $(i_s,j_s)$, with $w[i_s]=w[j_s]$, we must have $\pi(j_s) \equiv r_{j_s}^\theta (\text{mod } n_1)$, where $n= \theta n_1$, and $r_{j_s}^\theta := \pi(i_s-1)+\pi(i_s) - \pi(j_s-1) \, \, (\text{mod } n_1)$. Thus there are exactly $\theta$ many choices for $\pi(j_s)$, namely $\{ r_{j_s}^\theta + \mu n_1, \, \mu =0,1,\ldots, \theta-1\}$ (when $r_{j_s}^\theta =0$, the range of $\mu$ changes to $\{1,\ldots,\theta\}$). Note that all of these choices are valid, and respect the pair-matched condition. After choosing the vertices $(\pi(0), \pi(1), \ldots, \pi(2k-1))$, it thus remains to argue that for any such choices, there exists one and only choice of $\pi(2k)$ obeying the circuit condition $\pi(0)=\pi(2k)$.  Note that given $(\pi(0), \pi(1), \ldots, \pi(2k-1))$ one can choose $\pi(2k)$ again in $\theta$ ways, obeying the pair-matched condition. Since $w$ is a pair-matched symmetric word, we have $\pi(0) - \pi(2k) = (s_1+s_3+\cdots+s_{2k-1}) - (s_2+s_4+\cdots+s_{2k}) = \lambda n_1$, for any such choice of $\pi(2k)$, where $\lambda$ is an integer and $s_i=\pi(i-1)+\pi(i)$. Noting that $|\pi(0)-\pi(2k)| \le n-1$, we further have that $\lambda \in \Lambda_\theta:=\{0, \pm1 , \ldots, \pm (\theta -1) \}$. Since $\#\Lambda_\theta = 2 \theta -1$, it is not immediate that one of the $\theta$ many choices of $\pi(2k)$ will automatically yield the circuit condition. However, fixing $ \lambda_1 n_1 +1 \le \pi(0) \le (\lambda_1+1) n_1$, for some $\lambda_1 \in \{0,1,\ldots, \theta-1\}$, we indeed have that $\lambda$ can take only $\theta$ many values, namely $\{\lambda_1-\theta+1,\ldots, \lambda_1 -1, \lambda_1\}$ (by noting that $1\leqslant \pi(2k)=\pi(0)+\lambda n_1\leqslant n=\theta n_1$). Thus for a fixed $\pi(0)$ one has exactly $\theta$ many possible choices for $\pi(2k)$ and these must match the $\theta$ many choices when one fills the circuit from left to right. Since $\lambda_1-\theta+1\leqslant 0 \leqslant \lambda_1$, there is one and only one choice of $\pi(2k)$ among the $\theta$ many possible choices that yields $\lambda=0$, i.e. the circuit condition $\pi(0)=\pi(2k)$. This completes the proof of the claim that $p_\theta(w)= \theta^{k-1}$, along the subsequence $\{n_\ell^{(1)}\}$. As noted in the proof of Theorem \ref{thm:repr}, the limit $p_\theta(w)$ is same along all other subsequences, for any pair-matched word $w$. Thus for all symmetric pair-matched word $w$ the limit is indeed $p_\theta(w)=\theta^{k-1}$, along all subsequences.  Hence $\beta_{2k}^{(\theta)}= k! \theta^{k-1}$, which clearly matches the moments of $X_\theta$. Therefore the proof is complete. 
\end{proof}
\begin{table}[htbp]
\centering
\begin{tabular}{|c|c|}
\hline
Word & $p_\theta(w)$\\
\hline
$abba$ & $\big(1-\frac{\lfloor\theta \rfloor}{\theta}\big)\big(\lfloor\theta \rfloor+1\big)^2+\big(\frac{\lfloor\theta \rfloor+1}{\theta}-1\big)\lfloor\theta \rfloor^2$\\
$aabb$ & $\big(1-\frac{\lfloor\theta \rfloor}{\theta}\big)\big(\lfloor\theta \rfloor+1\big)^2+\big(\frac{\lfloor\theta \rfloor+1}{\theta}-1\big)\lfloor\theta \rfloor^2$\\
\hline
\end{tabular}
\vskip5pt
\caption{Word limits when $w\in \mathcal{C}_{4}$.}
\label{table2}
\end{table}
\begin{exm}\label{exm:comp}
Table \ref{table2} gives the word limits for $k=2$.  For example,  
\[
\beta_4^{(\theta)}=p(abba)+p(aabb)=2\Big(1-\frac{\lfloor\theta \rfloor}{\theta}\Big)\big(\lfloor\theta \rfloor+1\big)^2+2\Big(\frac{\lfloor\theta \rfloor+1}{\theta}-1\Big)\lfloor\theta \rfloor^2.
\]
Note that this implies that for non-integer values of $\theta \geq 1$ the LSD is not given by $B\sqrt{\theta}\mathcal{R}$.
\end{exm}

\section{Acknowledgements}
The research of the second author was supported by J. C. Bose National Fellowship, Department of Science and Technology, Government of India.

\bibliographystyle{alpha}
\bibliography{generalized_hankel_final_revision}
\end{document}